\newtheorem{thm}[equation]{Theorem}
\newtheorem{cor}[equation]{Corollary}
\newtheorem{lem}[equation]{Lemma}
\newtheorem{prop}[equation]{Proposition}
\newtheorem{question}[equation]{Question}
\theoremstyle{definition}
\newtheorem{defn}[equation]{Definition}
\newtheorem{exa}[equation]{Example}
\newtheorem{rem}[equation]{Remark}
\newtheorem{nota}[equation]{Notation}
    \newtheoremstyle{TheoremNum}
        {\topsep}{\topsep}              
        {\itshape}                      
        {}                              
        {\bfseries}                     
        {.}                             
        { }                             
        {\thmname{#1}\thmnote{ \bfseries #3}}
    \theoremstyle{TheoremNum}
    \newtheorem{thmn}{Theorem}
\numberwithin{equation}{section}
\newcommand{\recht}[1]{\textnormal{#1}}
\newcommand{\mfr}{\mathfrak}
\newcommand{\msc}{\mathscr}
\newcommand\cat\textsf
\newcommand{\DistTo}{\xrightarrow{\smash{\raisebox{-0.65ex}{\ensuremath{\scriptstyle\sim}}}}}
\DeclareRobustCommand{\VAN}[3]{#2}
\begin{document}

\title{Integral models of reductive groups and integral Mumford--Tate groups}
\author{Milan Lopuhaä-Zwakenberg}
\date{\today}

\maketitle

\section{Introduction}

Let $K$ be a number field or a $p$-adic field, and let $R$ be its ring of integers. 
Let $G$ be a connected reductive group over $K$. 
By a \emph{model} $\msc{G}$ of $G$ we mean a flat group scheme of finite type over $R$ such that $\msc{G}_K \cong G$. 
An important way to construct models of $G$ is the following. 
Let $V$ be a finite dimensional $K$-vector space, and let $\varrho\colon G \hookrightarrow \mathtt{GL}(V)$ be an injective map of algebraic groups (throughout this paper we write $\recht{GL}(V)$ for the abstract group, and $\mathtt{GL}(V)$ for the associated algebraic group over $K$). 
We consider $G$ as an algebraic subgroup of $\mathtt{GL}(V)$ via $\varrho$. Now let $\Lambda$ be a lattice in $V$, i.e. a locally free $R$-submodule of $V$ that generates $V$ as a $K$-vector space. 
Then $\mathtt{GL}(\Lambda)$ is a group scheme over $R$ whose generic fibre is canonically isomorphic to $\mathtt{GL}(V)$. 
Let $\mathtt{mdl}_G(\Lambda)$ be the Zariski closure of $G$ in $\mathtt{GL}(\Lambda)$; this is a model of $G$. 
In general, the group scheme $\mathtt{mdl}_{G}(\Lambda)$ depends on the choice of $\Lambda$, and one can ask the following question:

\begin{question}
Suppose that $G$, its representation $V$, and its model $\mathtt{mdl}_{G}(\Lambda)$ (as an isomorphism class of models of $G$) are given. 
To what extent can we recover the lattice $\Lambda \subset V$?
\end{question}

As a partial answer we can say that the group scheme $\mathtt{mdl}_{G}(\Lambda)$ certainly does not determine $\Lambda$ uniquely. 
Let $g \in \recht{GL}(V)$; then the automorphism $\recht{inn}(g)$ of $\mathtt{GL}(V)$ extends to an isomorphism $\mathtt{GL}(\Lambda) \DistTo \mathtt{GL}(g\Lambda)$. 
As such, we obtain an isomorphism between the group schemes $\mathtt{mdl}_G(\Lambda)$ and $\mathtt{mdl}_{gGg^{-1}}(g\Lambda)$. 
This shows that the group scheme $\mathtt{mdl}_G(\Lambda)$ only depends on the $N(K)$-orbit of $\Lambda$, where $N$ is the scheme-theoretic normaliser of $G$ in $\mathtt{GL}(V)$. 
The following theorem, which is the first main result of this paper, shows that $\mathtt{mdl}_G(\Lambda)$ determines the $N(K)$-orbit of $\Lambda$ up to finite ambiguity.

\begin{thmn}[\ref{thm:main1}]
Let $G$ be a connected reductive group over a number field or $p$-adic field $K$, and let $V$ be a faithful finite dimensional representation of $G$. 
Let $N$ be the scheme-theoretic normaliser of $G$ in $\mathtt{GL}(V)$. 
Let $\msc{G}$ be a model of $G$. 
Then the lattices $\Lambda$ in $V$ such that $\mathtt{mdl}_G(\Lambda) \cong \msc{G}$ are contained in at most finitely many $N(K)$-orbits.
\end{thmn}

In general, a model of $G$ will correspond to more than one $N(K)$-orbit of lattices, see Examples \ref{ex:cl} and \ref{ex:sl}. 

We can apply Theorem \ref{thm:main1} in the context of Shimura varieties and abelian varieties.
Let $g$ and $n > 2$ be positive integers, and let $\mathcal{A}_{g,n}$ be the moduli space of complex principally polarised abelian varieties of dimension $g$ with a given level $n$ structure. 
Let $Y$ be a special subvariety of $\mathcal{A}_{g,n}$, and let $\msc{G} = \mathtt{GMT}(Y)$ be the generic (integral) Mumford--Tate group of $Y$ (with respect to the variation of rational Hodge structures coming from the homology of the universal abelian variety with $\mathbb{Z}$-coefficients). This is an integral model of its generic fibre $\msc{G}_{\mathbb{Q}}$, which is a reductive algebraic group. While reductive groups over fields are well understood, generic integral Mumford--Tate groups are more complicated: there is no general classification of the models of a given rational reductive group, not even for tori (see \cite{fomina1997}). On the other hand, the advantage of the integral group scheme $\mathtt{GMT}(Y)$ is that it carries more information about $Y$ than its generic fibre. This can be seen in \cite[Thm.~4.1]{edixhovenyafaev2003}, where a lower bound is given on the size of the Galois orbit of a CM-point $x$ of a Shimura variety in terms of the reduction of $\mathtt{GMT}(x)$ at finite primes. In this paper, we present another result that highlights the importance of studying the integral group $\mathtt{GMT}(Y)$, by showing that $\mathtt{GMT}(Y)$ determines $Y$ up to a finite ambiguity:

\begin{thmn}[\ref{thm:main2}]
Let $g$ and $n$ be positive integers with $n > 2$, and let $\mathscr{G}$ be a group scheme over~$\mathbb{Z}$. 
Then there are only finitely many special subvarieties $Y$ of $\mathcal{A}_{g,n}$ such that $\mathtt{GMT}(Y) \cong \mathscr{G}$.
\end{thmn}

Note that the integral information is of finite importance here, as the rational group $\mathtt{GMT}(Y)_{\mathbb{Q}}$ 
is invariant under Hecke correspondences. The relation between Theorems \ref{thm:main1} and \ref{thm:main2} can be sketched as follows: Let $G := \mathtt{GMT}(Y)_{\mathbb{Q}}$. The inclusion $Y \hookrightarrow \mathcal{A}_{g,n}$ is induced by a morphism of Shimura data $\varrho\colon(G,X) \hookrightarrow (\mathtt{GSp}_{2g,\mathbb{Q}},\mathcal{H}_g)$ that is injective on the level of algebraic groups. 
The integral variation of Hodge structures defining $\mathtt{GMT}(Y)$ corresponds to a lattice $\Lambda$ in the standard representation $V$ of $\mathtt{GSp}_{2g,\mathbb{Q}}$.
Then $\mathtt{GMT}(Y)$ is isomorphic to $\mathtt{mdl}_G(\Lambda)$, where $V$ is regarded as a faithful representation of $G$ via $\varrho$. 
Replacing $Y$ by a Hecke translate corresponds to replacing the inclusion $G \hookrightarrow \mathtt{GSp}_{2g,\mathbb{Q}}$ by a conjugate, or equivalently, to choosing another lattice in $V$. 
This allows us to apply Theorem \ref{thm:main1} and prove Theorem \ref{thm:main2}, although we first need to make sure that up to a finite choice, Hecke correspondences are the only way to obtain special subvarieties with the same $G$ (see Proposition \ref{prop:main2rat}).

This paper is dedicated to the proofs of these two Theorems. 
In section \ref{s:background} we discuss some facts about models of reductive groups and their relation to Lie algebras. 
In section \ref{s:lie} we briefly recap the representation theory of split reductive groups. 
In section \ref{s:split} we prove Theorem \ref{thm:main1} for split reductive groups over local fields. 
In section \ref{s:local} we extend this result to non-split reductive groups over local fields using Bruhat--Tits theory, and in section \ref{s:numberfield} we prove Theorem \ref{thm:main1} in full generality. 
In section \ref{s:shimura} we prove Theorem \ref{thm:main2}.

\textbf{Acknowledgements:} The research of which this paper is a result was carried out as part of a Ph.D. project at Radboud University supervised by Ben Moonen, to whom I am grateful for comments and guidance. I also thank Johan Commelin and Maarten Solleveld for further comments. All remaining errors are, of course, my own.

\section{Lattices, models, and Lie algebras} \label{s:background}

In this section we will discuss a number of properties of models, and their relation to lattices in various vector spaces. 
Throughout we fix a number field or $p$-adic field $K$, along with its ring of integers $R$.

\subsection{Models of reductive groups}

\begin{defn} Let $G$ be a connected reductive algebraic group over $K$, and let $T$ be a maximal torus of $G$.
\begin{enumerate}
\item A \emph{model} of $G$ is a flat group scheme $\mathscr{G}$ of finite type over $R$ such that there exists an isomorphism $\varphi\colon \mathscr{G}_K \DistTo G$. 
Such an isomorphism is called an \emph{anchoring} of $\mathscr{G}$. 
The set of isomorphism classes of models of $G$ is denoted $\recht{Mdl}(G)$.
\item An \emph{anchored model} of $G$ is pair $(\mathscr{G},\varphi)$ of a model $\mathscr{G}$ and an anchoring $\varphi$. 
The set of isomorphism classes of anchored models of $G$ is denoted $\recht{Mdl}^{\recht{a}}(G)$.
\item A \emph{model} of $(G,T)$ is a pair $(\mathscr{G},\mathscr{T})$ of a model of $G$ and a closed reduced subgroup scheme $\mathscr{T}$ of $\mathscr{G}$, for which there exists an isomorphism $\varphi\colon \mathscr{G}_K \DistTo G$ such that $\varphi|_{\mathscr{T}_K}$ is an isomorphism $\mathscr{T}_K \DistTo T$. 
Such a $\varphi$ is called an \emph{anchoring} of $(\mathscr{G},\mathscr{T})$. 
The set of isomorphism classes of models of $(G,T)$ is denoted $\recht{Mdl}(G,T)$.
\end{enumerate}
\end{defn}

Note that there are natural forgetful maps $\recht{Mdl}^{\recht{a}}(G) \rightarrow \recht{Mdl}(G,T) \rightarrow \recht{Mdl}(G)$. 
Our use of the terminology `model' may differ from its use in the literature; 
for instance, some authors consider the choice of an anchoring to be part of the data (hence their `models' would be our `anchored models'), 
or they may impose other conditions on the group scheme $\mathscr{G}$ over $R$, see for instance \cite{conrad2011,fomina1997,gross1996}.
Our choice of terminology is justified by the fact that our models are exactly those that arise from lattices in representations (see Remark \ref{rem:sga}).

\begin{defn}
Let $V$ be a $K$-vector space. 
A \emph{lattice} in $V$ is a locally free $R$-submodule of $V$ that spans $V$ as a $K$-vector space. 
The set of lattices in $V$ is denoted $\recht{Lat}(V)$. 
If $H \subset \mathtt{GL}(V)$ is an algebraic subgroup, we write $\recht{Lat}_H(V)$ for the quotient $H(K) \backslash \recht{Lat}(V)$. 
\end{defn}


Let $G$ be a connected reductive group over $K$, and let $V$ be a finite dimensional faithful representation of $G$; 
we consider $G$ as an algebraic subgroup of $\mathtt{GL}(V)$. 
Let $\Lambda$ be a lattice in $V$. 
The identification $\Lambda_K = V$ induces a natural isomorphism $f_{\Lambda}\colon \mathtt{GL}(\Lambda)_K \DistTo \mathtt{GL}(V)$.
 Let $\mathtt{mdl}_G(\Lambda)$ be the Zariski closure of $f_{\Lambda}^{-1}(G)$ in $\mathtt{GL}(\Lambda)$; this is a model of $G$. If we let $\varphi_{\Lambda}$ be the isomorphism $f_{\Lambda}|_{\mathtt{mdl}_G(\Lambda)_K}\colon \mathtt{mdl}_G(\Lambda)_K \DistTo G$, then $(\mathtt{mdl}_G(\Lambda),\varphi_{\Lambda})$ is an anchored model of $G$. This gives us a map
\begin{align*}
\mathtt{mdl}^{\recht{a}}_G\colon \recht{Lat}(V) &\rightarrow \recht{Mdl}^{\recht{a}}(G) \\
\Lambda &\mapsto (\mathtt{mdl}_G(\Lambda),\varphi_{\Lambda}).
\end{align*}
The compositions of $\mathtt{mdl}^{\recht{a}}_G$ with the forgetful maps $\recht{Mdl}^{\recht{a}}(G) \rightarrow \recht{Mdl}(G,T)$ (for a maximal torus $T$ of $G$) and $\recht{Mdl}^{\recht{a}}(G) \rightarrow \recht{Mdl}(G)$ are denoted $\mathtt{mdl}_{G,T}$ and $\mathtt{mdl}_G$, respectively.

\begin{lem} \label{lem:mdldef}
Let $G$ be a connected reductive group over $K$ and let $V$ be a faithful finite dimensional representation of $G$. 
Consider $G$ as a subgroup of $\mathtt{GL}(V)$. 
Let $Z$ be the scheme-theoretic centraliser of $G$ in $\mathtt{GL}(V)$, and let $N$ be the scheme-theoretic normaliser of $G$ in $\mathtt{GL}(V)$. 
Let $T$ be a maximal torus of $G$, and let $H := Z \cdot T \subset \mathtt{GL}(V)$.
\begin{enumerate}
\item The map $\mathtt{mdl}_G^{\recht{a}}\colon \recht{Lat}(V) \rightarrow \recht{Mdl}^{\recht{a}}(G)$ factors through $\recht{Lat}_Z(V)$.
\item The map $\mathtt{mdl}_{G,T}\colon \recht{Lat}(V) \rightarrow \recht{Mdl}(G,T)$ factors through $\recht{Lat}_H(V)$.
\item The map $\mathtt{mdl}_G\colon \recht{Lat}(V) \rightarrow \recht{Mdl}(G)$ factors through $\recht{Lat}_N(V)$.
\end{enumerate}
\end{lem}

\begin{proof}
We only prove the first statement; the other two can be proven analogously. 
Let $g $ be an element of $\recht{GL}(V)$. 
The map $\recht{inn}(g) \in \recht{Aut}(\mathtt{GL}(V))$ extends to an automorphism $\mathtt{GL}(\Lambda) \rightarrow \mathtt{GL}(g\Lambda)$ as in the following diagram:

\begin{center}
\begin{tikzpicture}[description/.style={fill=white,inner sep=2pt}, bij/.style={below,sloped,inner sep=2pt}]
\matrix (m) [matrix of math nodes, row sep=3em,
column sep=6em, text height=1.5ex, text depth=0.25ex]
{ \mathtt{GL}(\Lambda) & \mathtt{GL}(g\Lambda) \\
\mathtt{GL}(\Lambda)_K & \mathtt{GL}(g\Lambda)_K \\
\mathtt{GL}(V) & \mathtt{GL}(V) \\
G & gGg^{-1} \\};
\path[->,font=\scriptsize]
(m-1-1) edge node[auto]{$(f_{g\Lambda} \circ \recht{inn}(g) \circ f_{\Lambda}^{-1})^{\recht{zar}}$} node[bij]{$\sim$} (m-1-2)
(m-2-1) edge (m-1-1)
(m-2-1) edge node[auto]{$f_{g\Lambda} \circ \recht{inn}(g) \circ f_{\Lambda}^{-1}$} node[bij]{$\sim$} (m-2-2)
edge node[auto] {$f_{\Lambda}$} node[bij]{$\sim$} (m-3-1)
(m-2-2) edge node[auto]{$f_{g\Lambda}$} node[bij]{$\sim$} (m-3-2)
(m-3-1) edge node[auto]{$\recht{inn}(g)$} node[bij]{$\sim$} (m-3-2) 
(m-4-1) edge node[bij]{$\sim$} (m-4-2);
\path[right hook->]
(m-2-1) edge (m-1-1)
(m-2-2) edge (m-1-2)
(m-4-1) edge (m-3-1)
(m-4-2) edge (m-3-2);
\end{tikzpicture}
\end{center}

This shows that $(\mathtt{mdl}_G(\Lambda),\varphi_{\Lambda}) \cong (\mathtt{mdl}_{gGg^{-1}}(g\Lambda),\recht{inn}(g)^{-1} \circ f_{g\Lambda})$ as anchored models of $G$. 
If $g$ is an element of $Z(K)$ we find that $(\mathtt{mdl}_G(\Lambda),\varphi_{\Lambda}) \cong (\mathtt{mdl}_G(g\Lambda),\varphi_{g\Lambda})$, as was to be proven.
\end{proof}

Throughout the rest of this paper we say that a map of sets if \emph{finite} if it has finite fibres. 
Our first main result, whose proof will be finalized in Section \ref{s:numberfield}, states that up to a finite ambiguity, the model $\mathtt{mdl}_G(\Lambda)$ characterises the $N(K)$-orbit of $\Lambda$.

\begin{thm} \label{thm:main1}
Let $G$ be a connected reductive group over a number field or $p$-adic field $K$, and let $V$ be a faithful finite dimensional representation of $G$. Let $N$ be the scheme-theoretic normaliser of $G$ in $\mathtt{GL}(V)$. Then the map $\mathtt{mdl}_G\colon \recht{Lat}_N(V) \rightarrow \recht{Mdl}(G)$ is finite.
\end{thm}

\begin{exa} \label{ex:cl}
Let $F$ be a number field, and let $G = \recht{Res}_{F/\mathbb{Q}}(\mathbb{G}_{\recht{m}})$ be the Weil restriction of $\mathbb{G}_{\recht{m}}$ from $F$ to $\mathbb{Q}$. 
Let $V$ be the $\mathbb{Q}$-vector space $F$, together with its natural representation of $G$. 
Let $\Lambda$ be a lattice in $V$, and define the ring $A_{\Lambda} := \{x \in F\colon x\Lambda \subset \Lambda\}$; this is an order in $F$. 
In this case one has $\mathtt{mdl}_G(\Lambda) \cong \recht{Res}_{A_{\Lambda}/\mathbb{Z}}(\mathbb{G}_{\recht{m}})$ as group schemes over $\mathbb{Z}$. 
Now let $\Lambda$ be such that $A_{\Lambda} = \mathscr{O}_F$. 
As a subgroup of $F$, the lattice $\Lambda$ can be considered as a fractional $\mathscr{O}_F$-ideal. 
Since in this case we have $N(\mathbb{Q}) = G(\mathbb{Q}) = F^\times$, the $N(\mathbb{Q})$-orbit of $\Lambda$ corresponds to an element of the class group $\recht{Cl}(F)$. 
On the other hand, every element of $\recht{Cl}(F)$ corresponds to a $N(\mathbb{Q})$-orbit of lattices $\Lambda$ in $V$ satisfying $A_{\Lambda} = \mathscr{O}_F$. 
In other words, there is a bijective correspondence between $N(\mathbb{Q})$-orbits of lattices yielding the model $\recht{Res}_{\mathscr{O}_{F}/\mathbb{Z}}(\mathbb{G}_{\recht{m}})$ of $G$, and elements of the class group $\recht{Cl}(F)$. 
This shows that a model of $G$ generally does not correspond to a single $N$-orbit of lattices. In this setting, Theorem \ref{thm:main1} recovers the well-known fact that $\recht{Cl}(F)$ is finite.
\end{exa}

\begin{rem} \label{rem:sga}
Let $G$ be a (not necessarily connected) reductive group over $K$, and let $\mathscr{G}$ be a model of $G$. 
Then \cite[Exp.~VI.B, Prop.~13.2]{sga3} tells us that there exists a free $R$-module $\Lambda$ of finite rank such that $\mathscr{G}$ is isomorphic to a closed subgroup of $\mathtt{GL}(\Lambda)$. 
If we take $V =\Lambda_K$, we find that $V$ is a faithful representation of $G$, and that $\mathscr{G}$ is the image of $\Lambda$ under the map $\mathtt{mdl}_G \colon \recht{Lat}(V) \rightarrow \recht{Mdl}(G)$. 
Hence every model of $G$ arises from a lattice in some representation.
\end{rem}

An important class of models that will turn up frequently in our discussions is given in the Definition below.

\begin{defn} \label{def:chevalley}
Suppose $G$ is a split reductive group with a split maximal torus $T$.
In that case there is exactly one model $(\mathscr{G},\mathscr{T})$ of $(G,T)$ such that $\mathscr{G}$ is reductive (i.e. smooth with reductive fibres) and such that $T$ is a split fibrewise maximal torus \cite[Exp. XXIII, Cor. 5.2; Exp. XXV, Cor. 1.2]{sga3}. 
This model is called the \emph{Chevalley model} of $(G,T)$. 
We also refer to $\mathscr{G}$ as the Chevalley model of $G$.
\end{defn}

\begin{exa} \label{ex:sl}
We give an example that shows that the map $\mathtt{mdl}\colon \recht{Lat}_N(V) \rightarrow \recht{Mdl}(G)$ is generally not injective even when only considering $p$-adic fields. Take $K = \mathbb{Q}_2$, and let $G$ be the algebraic group $\mathtt{PGL}_{2,\mathbb{Q}_2}$. The standard representation $V$ of $\tilde{G} = \mathtt{SL}_{2,\mathbb{Q}_2}$ induces a representation of $G$ on $W = \recht{Sym}^2(V)$. Let $\tilde{A} := \mathcal{O}(\tilde{G}) = \mathbb{Q}_2[x_{11},x_{12},x_{21},x_{22}]/(x_{11}x_{22}-x_{12}x_{21}-1)$, then $A := \mathcal{O}(G)$ is its $K$-subalgebra generated by monomials of even degree. If $\Lambda$ is a lattice in $W$, then $\recht{mdl}_G(\Lambda) \cong \recht{Spec}(\msc{A})$, where $\msc{A}$ is the image of the composite map $\mathcal{O}(\mathtt{GL}(\Lambda)) \hookrightarrow \mathcal{O}(\mathtt{GL}(V)) \twoheadrightarrow A$. Let $\{e_1,e_2\}$ be the standard basis of $V$. Then one can show that the lattices $\Lambda$ and $\Lambda'$ in $W$, generated by $\{e_1^2,e_1e_2,e_2\}$ and $\{e_1^2,2e_1e_2,e_2\}$ respectively, both induce the same $\msc{A}$, namely the $R$-subalgebra of $A$ generated by
\begin{equation*}
\Big\{x_{11}^2,x_{11}x_{12},x_{12}^2,x_{11}x_{21},x_{11}x_{22}+x_{12}x_{21},x_{12}x_{22},x_{21}^2,x_{21}x_{22},x_{22}^2\Big\}.
\end{equation*}
On the other hand, the normaliser of $G$ is equal to $N = T \cdot \sigma(\mathtt{GL}_{2,\mathbb{Q}_2})$, where $T \subset \mathtt{GL}(W)$ is the group of scalars and $\sigma\colon\mathtt{GL}_{2,\mathbb{Q}_2} \rightarrow \mathtt{GL}(W)$ is the natural representation. One can show that each element of $N(\mathbb{Q}_2)$ can be written (non-uniquely) as $t \cdot \sigma(g)$, where $t \in \mathbb{Q}_2^{\times}$ and $g \in \recht{GL}_2(\mathbb{Q}_2)$. For such an element, one has $\recht{det}(t \cdot \sigma(g)) = t^3\recht{det}(g)^3$. However, one has $\#(\Lambda/\Lambda') = 2$, and since $2$ is not a cube in $\mathbb{Q}_2$ this means that $\Lambda$ and $\Lambda'$ cannot be in the same $N(\mathbb{Q}_2)$-orbit. 
\end{exa}

\subsection{Lie algebras}

Let $\mathfrak{g}$ be the ($K$-valued points of the) Lie algebra of $G$. 
Let $\mathscr{G}$ be a model of $G$, and let $\mfr{G}$ be the ($R$-valued points of the) Lie algebra of $\msc{G}$. 
Then $\mathfrak{g}$ is a $K$-vector space of dimension $\recht{dim}(G)$, and $\mfr{G}$ is a locally free $R$-module of rank $\recht{dim}(G)$. 
If $\varphi$ is an anchoring of $\msc{G}$, then $\varphi$ induces an embedding of $R$-Lie algebras $\recht{Lie}\ \varphi\colon \mfr{G} \hookrightarrow \mfr{g}$, and its image is a lattice in $\mfr{g}$. 
Suppose $V$ is a faithful representation of $G$ and $\Lambda \subset V$ is a lattice such that $\mathtt{mdl}_{\recht{a}}(\Lambda) = (\msc{G},\varphi)$.
Then $(\recht{Lie} \ \varphi)(\mfr{G}) = \mfr{g} \cap \mfr{gl}(\Lambda)$ as subsets of $\mfr{gl}(V)$.

\subsection{Lattices in vector spaces over $p$-adic fields} \label{ss:dist}

Suppose $K$ is a $p$-adic field, and let $\pi$ be a uniformiser of $K$. 
Let $V$ like before be a finite dimensional $K$-vector space, and let $\Lambda$, $\Lambda'$ be two lattices in $V$. 
Then there exist integers $n,m$ such that $\pi^n\Lambda \subset \Lambda' \subset \pi^m \Lambda$. 
If we choose $n$ minimal and $m$ maximal, then we call $d(\Lambda,\Lambda') := n-m$ the \emph{distance} between $\Lambda$ and $\Lambda'$. 
Let $G$ be an algebraic subgroup of $\mathtt{GL}(V)$, and as before let $\recht{Lat}_G(V) = G(K) \backslash \recht{Lat}(V)$. 
We define a function
\begin{align*}
d_G\colon \recht{Lat}_G(V) \times \recht{Lat}_G(V) &\rightarrow \mathbb{R}_{\geq 0} \\
(X,Y) &\mapsto \min_{(\Lambda,\Lambda') \in X \times Y} d(\Lambda,\Lambda').
\end{align*}

The following lemma tells us that the name `distance' is justified. 
Its proof is straightforward and therefore omitted.

\begin{lem} \label{lem:dist} 
Let $V$ and $G$ be as above. Suppose $G$ contains the scalars in $\mathtt{GL}(V)$.
\begin{enumerate}
\item Let $X,Y \in \recht{Lat}_G(V)$ and let $\Lambda \in X$. Then $d_G(X,Y) = \min_{\Lambda' \in Y}d(\Lambda,\Lambda')$.
\item The map $d_G$ is a distance function on $\recht{Lat}_G(V)$.
\item For every $r \in \mathbb{R}_{\geq 0}$ and every $Y \in \recht{Lat}_G(V)$ the open ball $\{X \in \recht{Lat}_G(V):d_G(X,Y) < r\}$ is finite. \qed
\end{enumerate}
\end{lem}

\section{Representations of split reductive groups} \label{s:lie}

As before let $K$ be a number field or a $p$-adic field. 
In this section we will briefly review the representation theory of split reductive groups over $K$, and we will set up the notation for the (quite technical) next chapter.
Furthermore, we will prove some results on the associated representation theory of Lie algebras. 
We will assume all representations to be finite dimensional.

Let $G$ be a connected split reductive group over $K$, and let $T \subset G$ be a split maximal torus. 
Furthermore, we fix a Borel subgroup $B \subset G$ containing $T$. 
Let $\Psi \subset \recht{X}^*(T)$ be the set of roots of $G$ with respect to $T$; 
let $Q \subset \recht{X}^*(T)$ be the subgroup generated by $\Psi$. 
Associated to $B$ we have a basis $\Delta^+$ of $\Psi$ such that every $\beta \in \Psi$ can be written as $\beta = \sum_{\alpha \in \Delta^+} m_{\alpha} \alpha$, with the $m_{\alpha}$ either all nonpositive integers or all nonnegative integers.
 This gives a decomposition $\Psi = \Psi^+ \sqcup \Psi^-$. 
 Accordingly, if $\mfr{g}$ and $\mfr{t}$ are the Lie algebras of $G$ and $T$, respectively, we get 
 \begin{equation*}
 \mfr{g} = \mfr{t} \oplus \mfr{n}^+ \oplus \mfr{n}^- := \mfr{t} \oplus \left(\bigoplus_{\alpha \in \Psi^+} \mfr{g}_{\alpha}\right)\oplus \left(\bigoplus_{\alpha \in \Psi^-} \mfr{g}_{\alpha}\right).
 \end{equation*}
 The following theorem gives a description of the irreducible representations of $G$. 
If $V$ is a representation of $G$, we call the characters of $T$ that occur in $V$ the \emph{weights} of $V$ (with respect to $T$).

\begin{thm} \label{thm:irred}
Let $V$ be an irreducible representation of $G$.
\begin{enumerate}
\item \emph{(See \cite[Thm. 24.3]{milne2017})} There is a unique weight $\psi$ of $V$, called the \emph{highest weight} of $V$, such that $V_{\psi}$ is one-dimensional, and every weight of $V$ is of the form $\psi - \sum_{\alpha \in \Delta^+} m_{\alpha} \alpha$ for constants $m_{\alpha} \in \mathbb{Z}_{\geq 0}$.
\item \emph{(See \cite[{}3.39]{milne2013})} $V$ is irreducible as a representation of the Lie algebra $\mfr{g}$.
\item \emph{(See \cite[Ch.~VIII, §6.1, Prop.~1]{bourbaki2006})} $V$ is generated by the elements obtained by repeatedly applying $\mfr{n}^-$ to $V_{\psi}$. 
\item \emph{(See \cite[Thm. 24.3]{milne2017})} Up to isomorphism $V$ is the only irreducible representation of $G$ with highest weight $\psi$.\qed
\end{enumerate}
\end{thm}

\begin{rem} \label{rem:decompsplit}
With $G$ as above, let $V$ be any representation of $G$. 
Then, because $G$ is reductive, we know that $V$ is a direct sum of irreducible representations of $G$. 
By Theorem \ref{thm:irred}.1 we can canonically write $V = \bigoplus_{\psi \in \mathcal{D}} V_{(\psi)}$, where for $\psi \in \recht{X}^*(T)$ the subspace $V_{(\psi)}$ is the isotypical component of $V$ with highest weight $\psi$ (as a character of $T$), and $\mathcal{D}$ is the set of highest weights occuring in $V$. 
Furthermore, we can decompose every $V_{(\psi)}$ into $T$-character spaces, and we get a decomposition $V = \bigoplus_{\psi \in \msc{D}}\bigoplus_{\chi \in \recht{X}^*(T)} V_{(\psi),\chi}$.
\end{rem}

Let $U(\mfr{g})$ be the universal enveloping algebra of $\mfr{g}$. 
It obtains a $Q$-grading coming from the $Q$-grading of $\mfr{g}$; 
we may also regard this as a $\recht{X}^*(T)$-grading via the inclusion $Q \subset \recht{X}^*(T)$. 
If $V$ is a representation of $G$, then the associated map $U(\mfr{g}) \rightarrow \recht{End}(V)$ is a homomorphism of $\recht{X}^*(T)$-graded $K$-algebras. 
Furthermore, from the Poincaré--Birkhoff--Witt theorem it follows that there is a natural isomorphism of $Q$-graded $K$-algebras $U(\mfr{g}) \cong U(\mfr{n}^-) \otimes U(\mfr{t}) \otimes U(\mfr{n}^+)$, with the map from right to left given by multiplication. 
The following two results will be useful in the next section.

\begin{thm}[Jacobson density theorem] \label{thm:jacobson} Let $G$ be a split reductive group, and let $\mfr{g}$ be its Lie algebra. 
Let $V_1,\ldots,V_n$ be pairwise nonisomorphic irreducible representations of $G$. Then the induced map $U(\mfr{g}) \rightarrow \bigoplus_i \recht{End}(V_i)$ is surjective.
\end{thm}

\begin{proof}
This theorem is proven over algebraically closed fields in \cite[Thm.~2.5]{etingof2011} for representations of algebras in general (not just for universal enveloping algebras of Lie algebras). 
The hypothesis that $K$ is algebraically closed is only used in invoking Schur's Lemma, but this also holds for split representations of reductive groups.
\end{proof}

\begin{prop} \label{prop:surj}
Let $V$ be an irreducible representation of $G$ of highest weight $\psi$. 
Let $\chi$ be a weight of $V$. 
Then the maps 
$U(\mfr{n}^-)_{\chi-\psi} \rightarrow \recht{Hom}_K(V_{\psi},V_{\chi})$ and $U(\mfr{n}^+)_{\psi-\chi} \rightarrow \recht{Hom}_K(V_\chi,V_\psi)$ are surjective.
\end{prop}

\begin{proof}
From Theorem \ref{thm:irred}.3 we know that $V = U(\mfr{n}^-) \cdot V_{\psi}$. 
Since $U(\mfr{n}^-) \rightarrow \recht{End}(V)$ is a homomorphism of $\recht{X}^*(T)$-graded $K$-algebras, this implies that $V_{\chi} = U(\mfr{n}^-)_{\chi-\psi} \cdot V_{\psi}$. 
Since $V_{\psi}$ is one-dimensional by Theorem \ref{thm:irred}.1 this shows that $U(\mfr{n}^-)_{\chi-\psi} \rightarrow \recht{Hom}_K(V_{\psi},V_{\chi})$ is surjective.

For the surjectivity of the second map, let $f\colon V_{\chi} \rightarrow V_{\psi}$ be a linear map, and extend $f$ to a map $\tilde{f}\colon V \rightarrow V$ by letting $\tilde{f}$ be trivial on all $V_{\chi'}$ with $\chi' \neq \chi$. 
Then $\tilde{f}$ is pure of degree $\psi-\chi$, and $\psi-\chi \in Q$ by Theorem \ref{thm:irred}.1. 
By Theorem \ref{thm:jacobson} there exists a $u \in U(\mfr{g})_{\psi-\chi}$ such that the image of $u$ in $\recht{End}(V)$ equals $\tilde{f}$. 
We know that $U(\mfr{g}) = U(\mfr{n}^-) \otimes U(\mfr{t}) \otimes U(\mfr{n}^+)$; 
write $u = \sum_{i \in I} u_i^- \cdot t_i \cdot u_i^+$ with each $u_i^-$, $t_i$ and $u_i^+$ of pure degree, such that each $u_i^- \cdot t_i \cdot u_i^+$ is of degree $\psi-\chi$. 
Let $I'$ be the subset of $I$ of the $i$ for which $u_i^+$ is of degree $\psi-\chi$. 
Since only negative degrees (i.e. sums of nonpositive multiples of elements of $\Delta^+$) occur in $U(\mfr{n}^-)$ and only degree 0 occurs in $U(\mfr{t})$, this means that $u_i^-$ is of degree 0 for $i \in I'$; 
hence for these $i$ the element $u_i^-$ is a scalar. 
Now consider the action of $u$ on $V_\chi$. 
If $i \notin I'$, then the degree of $u_i^+$ will be greater than $\psi - \chi$, in which case we will have $u_i^+ \cdot V_{\chi} = 0$. 
For all $v \in V_\chi$ we now have
\begin{align*}
\tilde{f}(v) = u \cdot v &= \left(\sum_{i \in I} u_i^- \cdot t_i \cdot u_i^+\right) \cdot v \\
&= \left(\sum_{i \in I'} u_i^- \cdot t_i \cdot u_i^+\right) \cdot v \\
&= \left(\sum_{i \in I'} u_i^-\psi(t_i)u_i^+\right) \cdot v.
\end{align*}
Because every factor $u_i^-$ in this sum is a scalar, we know that $\sum_{i \in I'} u_i^-\psi(t_i)u_i^+$ is an element of $U(\mfr{n}^+)_{\chi-\psi}$, and it acts on $V_{\chi}$ as the map $f \in \recht{Hom}_K(V_\chi,V_\psi)$; 
hence the map $U(\mfr{n}^+)_{\psi-\chi} \rightarrow \recht{Hom}_K(V_\chi,V_\psi)$ is surjective.
\end{proof}

\section{Split reductive groups over local fields} \label{s:split}

In the rest of this chapter $K$ is either a number field or a $p$-adic field, and $R$ is its ring of integers. 
All representations of algebraic groups are assumed to be finite dimensional. 
The aim of this section is to prove the following Theorem.

\begin{thm} \label{thm:split}
Let $G$ be a \emph{split} connected reductive group over $K$, and let $V$ be a faithful representation of $G$. 
Regard $G$ as a subgroup of $\mathtt{GL}(V)$, and let $N$ be the scheme-theoretic normaliser of $G$ in $\mathtt{GL}(V)$.
\begin{enumerate}
\item Suppose $K$ is a $p$-adic field. 
Then the map $\mathtt{mdl}_G\colon \recht{Lat}_N(V) \rightarrow \recht{Mdl}(G)$ of Lemma \ref{lem:mdldef} is finite.
\item Suppose $K$ is a number field. 
Then for all but finitely many finite places $v$ of $K$ there is at most one $N(K_v)$-orbit $X$ of lattices in $V_{K_v}$ such that $\mathtt{mdl}_G(X)$ is the Chevalley model of $G$ (see Definition \ref{def:chevalley}).
\end{enumerate}
\end{thm}

The first point is Theorem \ref{thm:main1} for split reductive groups over local fields. 
The second point is technical by itself, but we need this finiteness result to combine the local information in order to Theorem \ref{thm:main1} for number fields. 
The proof is quite involved, and the overall strategy is as follows. 
Let $\mfr{g}$ be the Lie algebra of $G$, then $V$ is a faithful representation of $\mfr{g}$. 
If $\msc{G}$ is any model of $G$, then its Lie algebra $\mfr{G}$ is a lattice in $\mfr{g}$.
The proofs consists of the following steps:
\begin{enumerate}
\item We define a set of `nice' lattices in $\mfr{g}$ (Definition \ref{def:chevlat}) and a set of `nice' lattices in $V$ (Definition \ref{def:chevinvlat});
\item We show that the `nice' lattices in $V$ form only finitely many orbits under a suitably chosen algebraic group (Proposition \ref{prop:finhorbloc});
\item We show that if $\msc{G}$ is the model corresponding to a lattice $\Lambda$, then we can give an upper bound to the distance between $\Lambda$ and a `nice' lattice in $V$, in terms of the distance between $\msc{G}$ and a `nice' lattice in $\mfr{g}$ (Proposition \ref{prop:torusboundloc}).
\end{enumerate}
This upper bound allows us to prove that there are only finitely many $N(K)$-orbits corresponding to one model $\msc{G}$.

\subsection{Lattices in representations} \label{ss:lat}

In this section we will introduce two important classes of lattices that occur in representations of split reductive groups. 
We will rely on much of the results and notations from section \ref{s:lie}.

\begin{nota} \label{not:rep}
For the rest of section \ref{s:split}, we fix the following objects and notation:
\begin{itemize}
 \itemsep0em 
 \item a split connected reductive group $G$ over $K$ and a split maximal torus $T \subset G$;
 \item the Lie algebras $\mfr{g}$ and $\mfr{t}$ of $G$ and $T$, respectively;
 \item the root system $\Psi \subset \recht{X}^*(T)$ of $G$ with respect to $T$, and the subgroup $Q$ of $\recht{X}^*(T)$ generated by $\Psi$;
 \item the image $\bar{T}$ of $T$ in $G^\recht{ad} \subset \mathtt{GL}(\mfr{g})$;
 \item the decomposition $\mfr{g} = \mfr{t} \oplus \bigoplus_{\alpha \in \Psi} \mfr{g}_{\alpha}$;
 \item the basis of positive roots $\Delta^+$ of $\Psi$ associated to some Borel subgroup $B$ of $G$ containing $T$, the decompositions $\Psi = \Psi^+ \sqcup \Psi^-$ and $\mfr{g} = \mfr{t} \oplus \mfr{n}^+ \oplus \mfr{n}^-$;
 \item the $Q$-graded universal enveloping algebra $U(\mfr{g})$ of $\mfr{g}$;
 \item a faithful representation $V$ of $G$ and its associated inclusion $\mfr{g} \subset \mfr{gl}(V)$;
 \item the centraliser $Z$ of $G$ in $\mathtt{GL}(V)$, and the algebraic group $H = Z \cdot T \subset \mathtt{GL}(V)$;
 \item the decomposition $V = \bigoplus_{\psi \in \mathcal{D}} \bigoplus_{\chi \in \recht{X}^*(T)} V_{(\psi),\chi}$ (see Remark \ref{rem:decompsplit});
 \item the projections $\recht{pr}_{(\psi),\chi}\colon V \rightarrow V_{(\psi),\chi}$ associated to the decomposition above.
 \end{itemize}
 \end{nota}
 
\begin{rem} \label{rem:not} 
\begin{enumerate}
\item Since the set of characters of $T$ that occur in the adjoint representation is equal to $\{0\} \cup \Psi$, the inclusion $\recht{X}^*(\bar{T}) \hookrightarrow \recht{X}^*(T)$ has image $Q$.
\item By Schur's lemma the induced map $Z \rightarrow \prod_{\psi \in \msc{D}} \recht{GL}(V_{(\psi),\psi})$ is an isomorphism.
\end{enumerate}
\end{rem}

\begin{defn}
Let $W$ be a $K$-vector space with a decomposition $W = \bigoplus_i W_i$. 
An $R$-submodule $M \subset V$ is called \emph{split} with respect to this decomposition if one of the following equivalent conditions is satisfied:
\begin{enumerate}
 \itemsep0em 
 \item $M = \bigoplus_{i} \recht{pr}_{i} M$;
 \item $M = \bigoplus_{i} (W_i \cap M)$.
 \end{enumerate}
 If $M$ is split, we write $M_i := \recht{pr}_i M = W_i \cap M$.
 \end{defn}
 We now define two classes of lattices that will become important later on. 
 Since the Lie algebra $\mfr{g}$ is a $K$-vector space, we can consider lattices in $\mfr{g}$. 
 For a vector space $W$ over $K$, let $\recht{FLat}(W)$ be the set of lattices in $W$ that are free as $R$-modules. 
 Define the following sets:
 \begin{align*}
\mathcal{L}^+ &:= \prod_{\alpha \in \Delta^+} \recht{FLat}(\mfr{g}_\alpha);\\
\mathcal{L}^- &:= \prod_{\alpha \in \Delta^+} \recht{FLat}(\mfr{g}_{-\alpha});\\
\mathcal{J} &:= \prod_{\psi \in \mathcal{D}} \recht{FLat}(V_{(\psi),\psi}).
\end{align*}
As before, let $U(\mfr{n}^+)$ be the universal enveloping algebra of $\mfr{n}^+$. 
Let $L^+ = (L^+_{\alpha})_{\alpha \in \Delta^+}$ be an element of $\mathcal{L}^+$, 
and let $\msc{U}_{L^+}$ be the $R$-subalgebra of $U(\mfr{n}^+)$ generated by the $R$-submodules $L_{\alpha}^+ \subset \mfr{n}^+$. 
Define, for an $L^- \in \mathcal{L}^-$, the $R$-subalgebra $\msc{U}_{L^-} \subset U(\mfr{n}^-)$ analogously. 
Now let $L^+ \in\mathcal{L}^+$, $L^- \in \mathcal{L}^-$ and $J \in \mathcal{J}$ be as above. 
We define the following two $R$-submodules of $V$:
\begin{align*}
S^+(L^+,J) &:= \Big\{x \in V: \forall \psi \in \mathcal{D}, \recht{pr}_{(\psi),\psi}(\msc{U}_{L^+} \cdot x) \subset J_{\psi}\Big\},\\
S^-(L^-,J) &:= \sum_{\psi \in \msc{D}} \msc{U}_{L^-} \cdot J_{\psi} \subset V.
\end{align*}

Note that the sum in the second equation is actually direct, since $\msc{U}_{L^-} \cdot J_\psi \subset V_{(\psi)}$ for all $\psi \in \mathcal{D}$. 

\begin{prop} \label{prop:spm}
Let $L^+ \in \mathcal{L}^+$, $L^- \in \mathcal{L}^-$ and $J\in \mathcal{J}$.
\begin{enumerate}
\item $\msc{U}_{L^{\pm}}$ is a split lattice in $U(\mfr{n}^{\pm})$ with respect to the $Q$-grading.
\item $S^{\pm}(L^{\pm},J)$ is a split lattice in $V$ with respect to the decomposition $V = \bigoplus_{\psi,\chi} V_{(\psi),\chi}$.
\item For all $\psi \in \mathcal{D}$ one has $S^{\pm}(L^{\pm},J)_{(\psi),\psi} = J_{\psi}$. 
Furthermore, $S^+(L^+,J)$ (respectively $S^-(L^-,J)$) is the maximal (respectively minimal) split lattice $\Lambda$ in $V$ invariant under the action of the $L^+_{\alpha}$ (respectively the $L^-_{\alpha}$) such that $\Lambda_{(\psi),\psi} = J_{\psi}$ for all $\psi \in \mathcal{D}$.
\end{enumerate}
\end{prop}

\begin{proof} 
\begin{enumerate}
\item It suffices to prove this for $\msc{U}_{L^+}$. 
Recall that $U(\mfr{n}^+)$ has an $Q$-grading coming from the $Q$-grading on $U(\mfr{g})$. 
Since $U(L^+)$ is generated by elements of pure degree, we see that $\msc{U}_{L^+}$ is split with respect to the $Q$-grading; 
hence it suffices to show that $\msc{U}_{L^+,\chi}$ is a lattice in  $U(\mfr{n}^+)_{\chi}$ for all $\chi$. 
Since each $\mfr{g}_{\alpha}$ is one-dimensional, the $R$-module $L^+_{\alpha}$  is free of rank $1$; 
let $x_{\alpha}$ be a generator. 
Then the $R$-module $\msc{U}_{L^+,\chi}$ is generated by the finite set
\begin{equation*}
\left\{x_{\alpha_1} \cdot x_{\alpha_2} \cdots x_{\alpha_k}: k \in \mathbb{Z}_{\geq 0}, \sum_i \alpha_i = \chi\right\}.
\end{equation*}
On the other hand, the Poincaré--Birkhoff--Witt theorem tells us that the $K$-vector space $U(\mfr{n}^+)_{\chi}$ is also generated by this set; 
hence $\msc{U}_{L^+,\chi}$ is a split lattice in $U(\mfr{n}^+)_{\chi}$.
\item We start with $S^-(L^-,J)$. 
Since the action of $U(\mfr{n}^-)_{\chi}$ sends $V_{(\psi),\chi'}$ to $V_{(\psi),\chi+\chi'}$, we see that
\begin{equation*}
S^-(L^-,J) = \bigoplus_{\psi \in \mathcal{D}} \bigoplus_{\chi \in Q} \msc{U}_{L^-,\chi} \cdot J_{\psi} = \bigoplus_{\psi \in \mathcal{D}} \bigoplus_{\chi \in Q} S^-(L^-,J) \cap V_{(\psi),\psi+\chi},
\end{equation*}
hence $S^-(L^-,J)$ is split. Since $\msc{U}_{L^-,\chi}$ is a finitely generated $R$-module spanning $U(\mfr{n}^-)_{\chi}$, 
and $J^-_{\psi}$ is a finitely generated $R$-module spanning $V_{(\psi),\psi}$, 
we may conclude that $U(L^-)_{\chi} \cdot J^-_{\psi}$ is a finitely generated $R$-module spanning $U(\mfr{n}^-)_{\chi} \cdot V_{(\psi),\psi}$, 
which is equal to $V_{(\psi),\psi+\chi}$ by Proposition \ref{prop:surj}. 
Hence $S^-(L^-,J)_{(\psi),\psi+\chi}$ is a lattice in $V_{(\psi),\psi+\chi}$, 
and since $S^-(L^-,J)$ is a split $R$-module, 
this shows that $S^-(L^-,J)$ is a lattice in $V$.

Now consider $S^+(L^+,J)$.
 Let $x \in V$, and write $x = \sum_{\psi,\chi} x_{(\psi),\chi}$ with $x_{(\psi),\chi} \in V_{(\psi),\chi}$. 
 Then for every $\psi \in \msc{D}$ we have
\begin{equation*}
\recht{pr}_{(\psi),\psi}(\msc{U}_{L^+} \cdot x) = \sum_{\chi \in Q} \recht{pr}_{(\psi),\psi}(\msc{U}_{L^+,\chi} \cdot x) = \sum_{\chi \in Q} \msc{U}_{L^+,\chi} \cdot x_{(\psi),\psi-\chi},
\end{equation*}
hence $x$ is an element of $S^+(L^+,J)$ if and only if $x_{(\psi),\chi}$ is for all $\psi \in \mathcal{D}$ and all $\chi \in \recht{X}^*(T)$; 
this shows that $S^+(L^+,J)$ is split with respect to the decomposition $V = \bigoplus_{\psi,\chi} V_{(\psi),\chi}$. 
We now need to show that $S^+(L^+,J)_{(\psi),\chi}$ is a lattice in $V_{(\psi),\chi}$. 
Fix a $\chi$ and $\psi$, and choose a basis $f_1,\ldots,f_k$ of $J_{\psi}$; 
then $W_i := U(\mfr{g}) \cdot f_i$ is an irreducible subrepresentation of $V_{(\psi)}$. 
We get a decomposition $V_{(\psi),\chi} = \bigoplus_i W_{i,\chi}$, and from the definition of $S^+(L^+,J)$ we get
\begin{equation*}
S^+(L^+,J)_{(\psi),\chi} = \bigoplus_i S^+(L^+,J)_{(\psi),\chi} \cap W_{i,\chi}.
\end{equation*}
As such, we need to show that for each $i$ the $R$-module $S_{i,\chi} := S^+(L^+,J)_{(\psi),\chi} \cap W_{i,\chi}$ is a lattice in $W_{i,\chi}$. 
Fix an $i$, and let $e_1,\ldots,e_n$ be a basis of $W_{i,\chi}$. 
For $j \leq n$, let $\varphi_j\colon W_{i,\chi} \rightarrow W_{i,\psi} = K \cdot f_i$ be the linear map that sends $e_j$ to $f_i$, and the other $e_{j'}$ to $0$. 
By Proposition \ref{prop:surj} there exists a $u_j \in U(\mfr{n}^+)$ such that $u_j$ acts like $\varphi_j$ on $W_{i,\chi}$. 
Since $U(L^+ )$ is a lattice in $U(\mfr{n}^+)$, there exists a $r \in R$ such that $ru_j \in \msc{U}_{L^+}$ for all $j$. 
Then for all $x \in S_{i,\chi}$ one has $ru_j \cdot x \in Rf_i$ for all $j$, 
so $x$ lies in the free $R$-submodule of $W_{i,\chi}$ generated by $r^{-1}e_1,\ldots,r^{-1}e_n$; 
hence $S_{i,\chi}$ is finitely generated.

Now we need to show that $S_{i,\chi}$ spans $W_{i,\chi}$. 
Since $\msc{U}_{L^+,\psi-\chi}$ is finitely generated, for every $z \in W_{i,\chi}$ we get that $\msc{U}_{L^+,\psi-\chi} \cdot z$ is a lattice in $W_{i,\psi} = K\cdot f_i$.
As such we can find some $r' \in R$ such that $\msc{U}_{L^+,\psi-\chi} \cdot r'z \subset R \cdot f_i \subset J_{\psi}$;
hence $r'z \in S_{i,\chi}$. 
This shows that $S^+(L^+,J)_{(\psi),\chi,i}$ generates $W_{i,\chi}$ as a $K$-vector space, 
so $S_{i,\chi}$ is a lattice in $W_{i,\chi}$, as was to be shown.

\item Since $\msc{U}_{L^+,0} = \msc{U}_{L^-,0} = R$ we immediately get $S^+(L^+,J)_{(\psi),\psi} = J_{\psi}$ for all $\psi$. 
The other statement follows immediately from the definition of $S^+(L^+,J)$ and $S^-(L^-,J)$. \qedhere
\end{enumerate}
\end{proof}

\begin{rem} By Proposition \ref{prop:spm} we can define maps $S^{\pm}\colon \mathcal{L}^{\pm} \times \mathcal{J} \rightarrow \recht{Lat}(V)$.
\end{rem}

Let $H = Z \cdot T$ as before. 
Since $H$ normalises $G$, we see that $H$ acts on $G$ by conjugation. 
This gives us a representation $\varrho\colon H \rightarrow \mathtt{GL}(\mfr{g})$. 
Since $Z$ acts trivially on $G$, we see that the image of $H$ in $\recht{GL}(\mfr{g})$ is equal to $\bar{T}$;
hence the action of $H$ on $\mfr{g}$ respects the decomposition $\mfr{g} = \mfr{t} \oplus \bigoplus_{\alpha \in \Psi} \mfr{g}_{\alpha}$.

\begin{lem} \label{lem:hbart}
The map $\varrho\colon H \twoheadrightarrow \bar{T}$ is surjective on $K$-points.
\end{lem}

\begin{proof} The short exact sequence $1 \rightarrow Z \rightarrow H \rightarrow \bar{T} \rightarrow 1$ induces a longer exact sequence
\begin{equation*}
1 \rightarrow Z(K) \rightarrow H(K) \rightarrow \bar{T}(K) \rightarrow \recht{H}^1(K,Z).
\end{equation*}
Since $\recht{H}^1(K,\mathtt{GL}_n)$ is trivial for every integer $n$ and $Z$ is isomorphic to a product of $\mathtt{GL}_n$s by Remark \ref{rem:not}.2, 
this implies that the map $H(K) \rightarrow \bar{T}(K)$ is surjective.
\end{proof}

Since the action of $H$ on $\mfr{g}$ respects its decomposition into root spaces, 
we get an action of $H(K)$ on the sets $\mathcal{L}^{\pm}$. 
Furthermore, the representation $H \hookrightarrow \mathtt{GL}(V)$ respects the decomposition $V = \bigoplus_{\psi \in \mathcal{D}} V_{(\psi)}$. 
Since $H$ normalises $T$, the action of $H$ also respects the decomposition $V_{(\psi)} = \bigoplus_{\chi \in \recht{X}^*(T)} V_{(\psi),\chi}$; 
hence $H(K)$ acts on the set $\mathcal{J}$.

\begin{prop} \label{prop:trans}
The maps $S^{\pm}\colon \mathcal{L}^{\pm} \times \mathcal{J} \rightarrow \recht{Lat}(V)$ are $H(K)$-equivariant, and the action of $H(K)$ on $\mathcal{L}^{\pm} \times \mathcal{J}$ is transitive.
\end{prop}

\begin{proof}
The Lie algebra action map $\mfr{g} \times V \rightarrow V$ is equivariant with respect to the action of $H(K)$ on both sides. 
From the definition of $S^{\pm}(L^{\pm},J)$ it now follows that $S^{\pm}(h \cdot L^{\pm},h \cdot J) = h \cdot S^{\pm}(L^{\pm},J)$ for all $h \in H(K)$.
Let $L_1^+, L_2^+ \in \mathcal{L}^+$ and $J_1,J_2 \in \mathcal{J}$. 
For every $\alpha \in \Delta^+$, let $x_{\alpha} \in K^{\times}$ be such that $L^+_{1,\alpha} = x_{\alpha}L^+_{2,\alpha}$; 
the scalar $x_{\alpha}$ exists because $L^+_{1,\alpha}$ and $L^+_{2,\alpha}$ are free lattices in the same one-dimensional vector space. 
Since $\Delta^+$ is a basis for $Q = \recht{X}^*(\bar{T})$ (see Remark \ref{rem:not}.1) there exists a unique $t \in \bar{T}(K)$ such that $\alpha(t) = x_{\alpha}$ for all $\alpha \in \Delta^+$. 
By Lemma \ref{lem:hbart} there exists an $h \in H(K)$ such that $\varrho(h) = t$; 
then $h \cdot L_1^+ = L_2^+$. 
Since $Z(K)$ acts transitively on $\mathcal{J}$ by remark \ref{rem:not}.2, there exists a $z \in Z(K)$ such that $z \cdot (h \cdot J_1) = J_2$. 
As $z$ acts trivially on $\mathcal{L}^+$, we get $zh \cdot (L_1^+,J_1) = (L_2^+,J_2)$; 
this shows that $H(K)$ acts transitively on $\msc{L}^+ \times \msc{J}$. 
The proof for $\msc{L}^-$ is analogous.
\end{proof}

\subsection{Chevalley lattices} \label{ss:chevlat}

In this subsection we consider lattices in the $K$-vector space $\mfr{g}$. 
We will define the set of Chevalley lattices in $\mfr{g}$. 
The distance (in the sense of Lemma \ref{lem:dist}) between such a Chevalley lattice and the lattice corresponding to a model of $(G,T)$ will serve as a good measure of the `ugliness' of the model, and this will allow us to prove finiteness results. 

Let $G^{\recht{der}}$ be the derived group of $G$, and let $T'$ be the identity component of $T \cap G^{\recht{der}}$. 
Let $\mfr{g}^{\recht{ss}}$ and $\mfr{t}'$ be the Lie algebras of $G^{\recht{der}}$ and $T'$, respectively. 
The roots of $G$ (with respect to $T$) induce linear maps $\recht{Lie}(\alpha)\colon \mfr{t}' \rightarrow K$, and these form the root system of the split semisimple Lie algebra $(\mfr{g}^{\recht{ss}},\mfr{t}')$.
Since the Killing form $\kappa$ on $\mfr{t}'$ is nondegenerate 
there exists a unique $t_{\alpha} \in \mfr{t}'$ such that $\kappa(t_{\alpha},-) = \recht{Lie}(\alpha)$. Since $\kappa(t_{\alpha},t_{\alpha}) \neq 0$ we may define $h_{\alpha} := \frac{2}{\kappa(t_{\alpha},t_{\alpha})}t_{\alpha}$; see \cite[Prop.~8.3]{humphreys1972}.

\begin{defn}
An element $x = (x_{\alpha})_{\alpha \in \Psi}$ of $\prod_{\alpha \in \Psi} (\mfr{g}_{\alpha} \backslash \{0\})$ is called a \emph{Chevalley set} if the following conditions are satisfied:
\begin{enumerate}
\item $[x_{\alpha},x_{-\alpha}] = h_{\alpha}$ for all $\alpha \in \Psi$;
\item If $\alpha$ and $\beta$ are two $\mathbb{R}$-linearly independent roots such that $\beta+\mathbb{Z}\alpha$ intersects $\Psi$ in the elements $\beta-r\alpha,\beta-(r-1)\alpha,\ldots,\beta+q\alpha$, then $[x_{\alpha},x_{\beta}] = 0$ if $q = 0$, and $[x_{\alpha},x_{\beta}] = \pm (r+1)x_{\alpha+\beta}$ if $q > 0$.
\end{enumerate}
\end{defn}

There is a canonical isomorphism of $K$-vector spaces:
\begin{align} \label{eq:isomchar}
K \otimes_{\mathbb{Z}} \recht{X}^*(T) &\DistTo \mathfrak{t}^{\vee} \\
1 \otimes \alpha &\mapsto \recht{Lie}(\alpha).
\end{align}
Under this isomorphism, we can consider $\mathfrak{T}_0 := (R \otimes_{\mathbb{Z}} \recht{X}^*(T))^{\vee}$ as an $R$-submodule of $\mathfrak{t}$.

\begin{lem}
Let $\alpha \in \Phi$. Then $h_{\alpha} \in \mathfrak{T}_0$.
\end{lem}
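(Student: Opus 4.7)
The plan is to identify $h_\alpha$ with the derivative at the identity of the algebraic coroot $\alpha^\vee\colon \mathbb{G}_{\recht{m}} \to T'$, after which the claim reduces immediately to the defining integrality property of root data. Concretely, $\mathfrak{T}_0 = (R \otimes_{\mathbb{Z}} \recht{X}^*(T))^\vee$ consists of those $v \in \mathfrak{t}$ for which $\recht{Lie}(\chi)(v) \in R$ for every $\chi \in \recht{X}^*(T)$, so it suffices to show that $\recht{Lie}(\chi)(h_\alpha) \in \mathbb{Z}$ for all such $\chi$.

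First I would reduce to the rank-one situation. For each root $\alpha$, the root subgroups $U_\alpha$ and $U_{-\alpha}$ generate a semisimple rank-one subgroup $H_\alpha \subset G^{\recht{der}}$ whose Lie algebra is the three-dimensional subalgebra $\mathfrak{g}_\alpha \oplus [\mathfrak{g}_\alpha,\mathfrak{g}_{-\alpha}] \oplus \mathfrak{g}_{-\alpha}$. By the standard $\mathfrak{sl}_2$-theory for semisimple Lie algebras (see \cite[\S 8]{humphreys1972}), the normalisation $h_\alpha = \frac{2}{\kappa(t_\alpha,t_\alpha)} t_\alpha$ is precisely the element that, together with suitably scaled $x_{\pm\alpha} \in \mathfrak{g}_{\pm\alpha}$, forms a standard $\mathfrak{sl}_2$-triple inside $\recht{Lie}(H_\alpha)$.

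Next I would match $h_\alpha$ with the group-theoretic coroot. Via the central isogeny $\recht{SL}_2 \to H_\alpha$ (or $H_\alpha \to \recht{PGL}_2$) sending the standard diagonal torus to the image of $\alpha^\vee\colon \mathbb{G}_{\recht{m}} \to T'$, the element $\recht{diag}(1,-1) \in \mathfrak{sl}_2$ maps to the standard Cartan element of $\recht{Lie}(H_\alpha)$, which by Step 1 is $h_\alpha$. Differentiating $\alpha^\vee$ at the identity therefore sends $1 \in \recht{Lie}(\mathbb{G}_{\recht{m}}) = K$ to $h_\alpha \in \mathfrak{t}'$. Consequently, for any $\chi \in \recht{X}^*(T)$, the composite $\chi \circ \alpha^\vee\colon \mathbb{G}_{\recht{m}} \to \mathbb{G}_{\recht{m}}$ is the $n$-th power map for the integer $n = \langle \chi, \alpha^\vee \rangle$, and taking Lie algebras yields $\recht{Lie}(\chi)(h_\alpha) = n \in \mathbb{Z} \subset R$. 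Since this holds for every $\chi$, we conclude $h_\alpha \in \mathfrak{T}_0$.

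The main obstacle is the comparison in the second step: verifying that the Lie-algebraic $h_\alpha$ defined via the Killing form really coincides with the derivative of the algebraic coroot $\alpha^\vee$. This is precisely why the $2/\kappa(t_\alpha,t_\alpha)$ normalisation is chosen, and is classical (see e.g.\ \cite[Ch.~VIII, \S 2]{bourbaki2006}); once granted, the rest of the argument is just the tautology that weights pair integrally with coroots in a root datum.
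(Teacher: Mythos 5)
Your proof is correct and follows essentially the same route as the paper: both reduce the claim to showing $\recht{Lie}(\chi)(h_\alpha) \in \mathbb{Z}$ for every $\chi \in \recht{X}^*(T)$, and the key identification $h_\alpha = d\alpha^\vee(1)$ is precisely the content of the reference \cite[{}31.1]{humphreys1975} that the paper invokes; you have simply unpacked that citation via the rank-one $\mathfrak{sl}_2$-subgroup and the integrality of the root-datum pairing.
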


\begin{proof}
It suffices to show that $\recht{Lie}(\lambda)(h_{\alpha}) \in \mathbb{Z}$ for all $\lambda \in \recht{X}^*(T)$. 
Since the action of $\lambda \in \recht{X}^*(T)$ on $\mfr{t}'$ only depends on its image in $\recht{X}^*(T')$, it suffices to prove this for semisimple $G$, for which see \cite[{}31.1]{humphreys1975}.
\end{proof}

\begin{defn} \label{def:chevlat}
A \emph{Chevalley lattice} is an $R$-submodule of $\mfr{g}$ of the form
\begin{equation*}
\mfr{C}(x) = \mfr{T}_0 \oplus \bigoplus_{\alpha \in \Psi} R \cdot x_{\alpha},
\end{equation*}
where $x$ is a Chevalley set. 
The set of Chevalley lattices is denoted $\mathcal{C}$.
\end{defn}

\begin{rem}
It is clear that $\mfr{C}(x)$ is a finitely generated $R$-submodule of $\mfr{g}$ that generates $\mfr{g}$ as a $K$-vector space, hence it is indeed a lattice.
The name comes from the fact that if $G$ is adjoint, then $\{h_{\alpha}\}_{\alpha \in \Delta^+} \cup \{x_{\alpha}\colon \alpha \in \Psi\}$ is a \emph{Chevalley basis} of $\mfr{g}$ in the sense of \cite[{}25.2]{humphreys1972}, and the Lie algebra of the Chevalley model (for any anchoring) is a Chevalley lattice in $\mfr{g}$.
\end{rem}

\begin{lem} \label{lem:chevexists}

 Let $\recht{Aut}(G,T) = \{\sigma \in \recht{Aut}(G): \sigma(T) = T\}$.
\begin{enumerate}
\item There exists a Chevalley lattice in $\mfr{g}$.
\item Every Chevalley lattice is an $R$-Lie subalgebra of $\mfr{g}$.
\item Let $\sigma \in \recht{Aut}(G,T)$, and let $\mfr{C} \in \mathcal{C}$. Then the lattice $\sigma(\mfr{C}) \subset \mfr{g}$ is again a Chevalley lattice.
\end{enumerate}
\end{lem}

\begin{proof} 
\begin{enumerate}
\item It suffices to show that a Chevalley set exists, for which we refer to \cite[Th.~25.2]{humphreys1972}.
\item By definition we have $[x_{\alpha},x_{-\alpha}] \in \mfr{T}_0$ and $[x_{\alpha},x_{\beta}] \in R\cdot x_{\alpha+\beta}$ if $\alpha+\beta \neq 0$.
 Furthermore for $t \in \mathfrak{T}_0$ one has $[t,x_{\alpha}] = \recht{Lie}(\alpha)(t) \cdot x_{\alpha} \in R \cdot x_{\alpha}$ by definition of $\mathfrak{T}_0$.
\item The automorphism $\sigma \in \recht{Aut}(G,T)$ induces an automorphism $\bar{\sigma}$ of $\Psi$. 
Then $\sigma$ maps $\mfr{g}_{\alpha}$ to $\mfr{g}_{\bar{\sigma}(\alpha)}$ and $\mfr{T}_0$ to itself. 
Let $x$ be a Chevalley set such that $\mfr{C} = \mfr{C}(x)$, and define $x' = (x'_{\alpha})_{\alpha \in \Psi}$ by $x'_{\alpha} = \sigma(x_{\bar{\sigma}^{-1}(\alpha)})$. 
Since $\sigma(h_{\alpha}) = h_{\bar{\sigma}(\alpha)}$ this is again a Chevalley set, and $\sigma(\mfr{C}) = \mfr{C}(x')$. \qedhere
\end{enumerate}
\end{proof}

It is easily checked that the action of $H(K)$ on $\recht{Lat}(\mfr{g})$ sends its subset $\mathcal{C}$ to itself. 
Furthermore there are natural isomorphisms of $H(K)$-sets
\begin{align} \label{eq:fpm}
f^{\pm}\colon \mathcal{C} &\DistTo \mathcal{L}^{\pm} \\
\mfr{C} & \mapsto (\mfr{C} \cap \mfr{g}_{\pm \alpha})_{\alpha \in \Delta^+}.
\end{align}
Since the action of $H(K)$ on $\msc{L}^{\pm}$ is transitive, we have shown:
\begin{lem}
The action of $H(K)$ on $\mathcal{C}$ is transitive. \qed
\end{lem}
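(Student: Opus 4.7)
The plan is to transfer transitivity from the already-understood action of $H(K)$ on $\msc{L}^+$ to the action on $\msc{C}$ via the equivariant bijection $f^+$ introduced immediately above the lemma. First I would observe that Proposition \ref{htrans} gives transitivity of $H(K)$ on $\msc{L}^+ \times \msc{J}$, hence on $\msc{L}^+$: the first projection $\msc{L}^+ \times \msc{J} \to \msc{L}^+$ is $H(K)$-equivariant and surjective, so transitivity on the product forces transitivity on the factor.

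Next I would check that $f^+\colon \msc{C} \to \msc{L}^+$ is $H(K)$-equivariant. This is built in: since $\varrho(H) = \bar{T}$, every $h \in H(K)$ acts on $\mfr{g}$ preserving the root space decomposition, so $h(C \cap \mfr{g}_{\alpha}) = h(C) \cap \mfr{g}_{\alpha}$ for all $C \in \msc{C}$ and $\alpha \in \Psi$. Granted bijectivity of $f^+$, transitivity of $H(K)$ on $\msc{L}^+$ transfers immediately to transitivity on $\msc{C}$.

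The one nontrivial input, declared in the text just above the lemma but not proved, is that $f^+$ is a bijection. For injectivity, a Chevalley lattice $C(x)$ is determined by $(R \cdot x_{\alpha})_{\alpha \in \Delta^+}$, because the torus part $\mfr{T}_0$ is canonical and the Chevalley commutator relations $[x_\alpha, x_\beta] = \pm(r+1) x_{\alpha+\beta}$ recursively determine $R \cdot x_\gamma$ for every $\gamma \in \Psi^+ \setminus \Delta^+$, while $[x_\alpha, x_{-\alpha}] = h_\alpha$ pins down $R \cdot x_{-\alpha}$ once $x_\alpha$ is fixed. For surjectivity one reverses this construction: given $(L^+_{\alpha})_{\alpha \in \Delta^+} \in \msc{L}^+$, pick generators $x_\alpha$ of $L^+_\alpha$ and extend to a full Chevalley set by the same commutator relations, with the existence of a compatible global choice guaranteed by Lemma \ref{chevexists}(1). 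This verification is the main technical point of the lemma, but it is a bookkeeping exercise in Chevalley's structure theorem rather than a genuine obstacle.
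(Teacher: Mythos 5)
Your proof is correct and matches the paper's approach: transfer transitivity from $\msc{L}^+$ to $\msc{C}$ via the $H(K)$-equivariant bijection $f^+$, with transitivity on $\msc{L}^+$ deduced from Proposition \ref{htrans} by projecting. The paper asserts the bijectivity of $f^{\pm}$ without proof, and your sketch of it is sound --- the key point being that the Chevalley structure constants $\pm(r+1)$ are fixed integers determined by the root system, so the recursion is consistent across Chevalley sets; just note that after $[x_\alpha,x_{-\alpha}]=h_\alpha$ fixes the simple negative root lines, the remaining $R\cdot x_\gamma$ for $\gamma\in\Psi^-\setminus(-\Delta^+)$ are recovered by the same commutator recursion in $\mfr{n}^-$.
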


\begin{lem} \label{lem:nulinv}
Let $\mfr{C} \in \mathcal{C}$ be a Chevalley lattice and let $\msc{U}_{\mfr{C}}$ be the $R$-subalgebra of $U(\mfr{g})$ generated by $\mfr{C}$. 
Then $\msc{U}_{\mfr{C}}$ is split with respect to the $Q$-grading of $U(\mfr{g})$. 
The subalgebra $\msc{U}_{\mfr{C},0} \subset U(\mfr{g})_0$ does not depend on the choice of $\mfr{C}$.
\end{lem}

\begin{proof}
The fact that $\msc{U}_{\mfr{C}}$ is split follows from the fact that it is generated by elements of pure degree. 
Now let $\mfr{C},\mfr{C}' \in \mathcal{C}$. 
Since $H(K)$ acts transitively on $\mathcal{C}$ and the action of $H$ on $\mathcal{C}$ factors through $\bar{T}$, there exists a $t \in \bar{T}(K)$ such that $t\cdot\mfr{C} = \mfr{C}'$. 
Then $\msc{U}_{\mfr{C}'} = t \cdot \msc{U}_{\mfr{C}'}$, where $t$ acts on $U(\mfr{g})$ according to its $Q$-grading. 
In particular this shows that $\msc{U}_{\mfr{C},0} = \msc{U}_{\mfr{C}',0}$.
\end{proof}

\begin{lem} \label{lem:projchev}
There exists an $r \in R$ such that for every Chevalley lattice $\mfr{C}$, every $\psi \in \mathcal{D}$ and every $\chi \in \recht{X}^*(T)$, the endomorphism $r\recht{pr}_{(\psi),\chi}$ of $V$ lies in the image of the map $\msc{U}_{\mfr{C}} \rightarrow \recht{End}(V)$.
\end{lem}

\begin{proof}
First, we show that $\recht{pr}_{(\psi),\chi}$ lies in the image of $U(\mfr{g})$.
Fix a $\psi_0 \in \mathcal{D}$ and a $\chi \in \recht{X}^*(T)$. 
For every $\psi \in \mathcal{D}$, let $W(\psi)$ be the irreducible representation of $G$ of highest weight $\psi$. 
Let $f \in \bigoplus_{\psi \in \mathcal{D}} \recht{End}(W(\psi))$ be the element whose $\psi_0$-component is $\recht{pr}_{\chi}$ and whose other components are $0$. 
By Theorem \ref{thm:jacobson} there exists a $u_{\psi_0,\chi} \in U(\mfr{g})_0$ that acts as $f$ on $\bigoplus_{\psi \in \msc{D}} W(\psi)$; 
then $u_{\psi_0,\chi}$ acts as $\recht{pr}_{(\psi),\chi}$ on $V$. 
Let $\mfr{C}$ be a Chevalley lattice. Since there are only finitely many choices for $\psi$ and $\chi$, there exists an $r \in R$ such that $ru_{\psi,\chi} \in  \msc{U}_{\mfr{C},0}$ for all $\psi \in \mathcal{D}$ and all $\chi$ for which $V_{(\psi),\chi} \neq 0$. 
Then $r$ satisfies the properties of the Lemma for $\mfr{C}$. By Lemma \ref{lem:nulinv} the element $r$ works regardless of the choice of $\mfr{C}$, which proves the Lemma.
\end{proof}

\subsection{Chevalley-invariant lattices}

In this section we consider lattices in $V$ that are invariant under some Chevalley lattice in $\mfr{g}$. 
The main result is that for $p$-adic $K$, up to $H(K)$-action only finitely many such lattices exist (Proposition \ref{prop:finhorbloc}).

\begin{defn} \label{def:chevinvlat}
Let $\Lambda$ be a lattice in $V$. We call $\Lambda$ \emph{Chevalley-invariant} if there exists a Chevalley lattice $\mfr{C} \subset \mfr{g}$ such that $\mfr{C} \cdot \Lambda \subset \Lambda$.
\end{defn}

\begin{lem} \label{lem:chev}
There exists a Chevalley-invariant lattice in $V$.
\end{lem}

\begin{proof}
This is proven for $K = \mathbb{Q}$ in \cite[Ch.~VIII, §12.8, Th.~4]{bourbaki2006};
the proof given there also works for general $K$. 
\end{proof}

\begin{lem}
The set of Chevalley-invariant lattices is invariant under the action of $H(K)$ on $V$.
\end{lem}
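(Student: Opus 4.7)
The plan is to show that if $\Lambda$ is Chevalley-invariant via a Chevalley lattice $C \subset \mfr{g}$ and $h \in H(K)$, then $h \cdot \Lambda$ is Chevalley-invariant via $\varrho(h)\cdot C$. Two ingredients are needed: first, that $\varrho(h) \cdot C$ is again a Chevalley lattice, and second, that the invariance property transports along $h$.

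The first ingredient is essentially free: it was already observed immediately before the statement that the action of $H(K)$ on $\msc{R}(\mfr{g})$ (coming from $\varrho\colon H \to \bar{T} \subset \recht{GL}(\mfr{g})$) preserves the subset $\msc{C}$ of Chevalley lattices. So I would just cite this fact and get $\varrho(h) \cdot C \in \msc{C}$.

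The second ingredient is the $H$-equivariance of the action map $\mfr{g} \otimes V \to V$. Since $H$ is contained in $\recht{GL}(V)$ and $\mfr{g}$ is embedded in $\mfr{gl}(V)$, the representation $\varrho$ is simply conjugation: $\varrho(h)(x) = hxh^{-1}$ in $\mfr{gl}(V)$, for $h \in H(K)$ and $x \in \mfr{g}$ (this is why $\varrho$ factors through $\bar T$, because conjugation by $Z$ is trivial on $G$ and hence on $\mfr{g}$). A one-line computation in $\mfr{gl}(V)$ then gives, for any $v \in V$, the identity
$$(\varrho(h) x) \cdot (hv) \;=\; (hxh^{-1})(hv) \;=\; h \cdot (xv).$$

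With these two ingredients the conclusion is immediate: for $x \in C$ and $v \in \Lambda$ one has $xv \in \Lambda$ by Chevalley-invariance of $\Lambda$, hence $(\varrho(h)x)(hv) = h(xv) \in h\Lambda$. Varying $x$ and $v$ we obtain $(\varrho(h) \cdot C) \cdot (h\Lambda) \subset h\Lambda$, and since $\varrho(h) \cdot C$ is a Chevalley lattice, $h\Lambda$ is Chevalley-invariant. There is no real obstacle here; the only thing to be careful about is making explicit that the abstract action of $H(K)$ on $\mfr{g}$ used in the definition of $\msc{C}$ agrees with conjugation inside $\mfr{gl}(V)$, so that the equivariance identity applies.
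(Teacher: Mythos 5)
Your proof is correct and follows the same route as the paper: both reduce to the observation that $h\cdot\Lambda$ is invariant under $h\cdot C$ and that $H(K)$ preserves the set of Chevalley lattices. You simply make explicit the one-line conjugation computation $(hxh^{-1})(hv)=h(xv)$ that the paper leaves implicit.
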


\begin{proof}
If $\Lambda$ is invariant under a Chevalley lattice $\mfr{C}$ and $h$ is an element of $H(K)$, then $h \cdot \Lambda$ is invariant under $h \cdot \mfr{C}$; 
hence this follows from the fact that the set of Chevalley lattices is invariant under the action of $H(K)$.
\end{proof}

\begin{rem}
Since $H(K)$ acts transitively on the set of Chevalley lattices, we see that for every Chevalley lattice $\mfr{C}$ there is a lattice in $V$ invariant under $\mfr{C}$.
\end{rem}

The following Lemma shows that every Chevalley-invariant lattice is contained within the lattices $S^{\pm}(L^{\pm},J)$ introduced in section \ref{ss:lat}. We let $f^{\pm}$ be the maps from (\ref{eq:fpm}).

\begin{lem} \label{lem:chevbound}
Let $\mfr{C}_0$ be a Chevalley lattice in $\mfr{g}$, and let $J_0 \in \mathcal{J}$. 
Let $L_0^+ = f^+(\mfr{C}_0)$ and $L_0^- = f^-(\mfr{C}_0)$. 
Let $\Lambda \subset V$ be a split Chevalley-invariant lattice such that $\Lambda_{(\psi),\psi}$ is a free $R$-module for all $\psi \in \mathcal{D}$. Then there exists an $h \in H(K)$ such that
\begin{equation*}
S^-(L_0^-,J_0) \subset h \cdot \Lambda \subset S^+(L_0^+,J_0).
\end{equation*}
\end{lem}

\begin{proof}
Let $\mfr{C}$ be a Chevalley lattice in $\mfr{g}$ such that $\mfr{C} \cdot \Lambda \subset \Lambda$. 
Let $J = (\Lambda_{(\psi),\psi})_{\psi \in \mathcal{D}}$; 
since all $\Lambda_{(\psi),\psi)}$ are free lattices, it is an element of $\mathcal{J}$.
Since $\mathcal{C}$ is isomorphic to $\msc{L}^+$ as $H(K)$-sets, by Proposition \ref{prop:trans} there exists an $h \in H(K)$ such that $h \cdot \mfr{C} = \mfr{C}_0$ and $h \cdot J = J_0$. 
Now let $\Lambda_0 = h \cdot \Lambda$; 
this is a split lattice satisfying $(\Lambda_0)_{(\psi),\psi} = J_{0,\psi}$ for all $\psi$. 
Furthermore, the lattice $\Lambda_0$ is invariant under the action of the Chevalley lattice $\mfr{C}_0$; 
in particular it is invariant under the action of the $f^+(\mfr{C}_0)_{\alpha} = \mfr{C}_0 \cap \mfr{g}_{\alpha}$ and the $f^-(\mfr{C}_0)_{\alpha} = \mfr{C}_0 \cap \mfr{g}_{-\alpha}$. 
By Proposition \ref{prop:spm}.3 we now get
\[
S^-(L_0^-,J_0) \subset \Lambda_0 \subset S^+(L_0^+,J_0). \qedhere
\]
\end{proof}

\begin{prop} \label{prop:finhorbloc}
Suppose $K$ is a $p$-adic field. 
Then there are only finitely many $H(K)$-orbits of Chevalley-invariant lattices.
\end{prop}

\begin{proof}
Let $\mfr{C}_0$, $J_0$, $L^+_0$ and $L^-_0$ be as in the previous Lemma. 
Let $\omega$ be a uniformiser of $K$. 
Let $m \in \mathbb{Z}_{\geq 0}$ be such that $\omega^mS^+(L_0^+,J_0) \subset S^-(L_0^-,J_0)$, and let $n \in \mathbb{Z}_{\geq 0}$ be such that for every Chevalley lattice $\mfr{C}$, every $\psi \in \mathcal{D}$ and every $\chi \in \recht{X}^*(T)$ the endomorphism $\omega^n\recht{pr}_{(\psi),\chi} \in \recht{End}(V)$ lies in the image of $\msc{U}_{\mfr{C}}$;
such an $n$ exists by Lemma \ref{lem:projchev}. 
Let $P^+$ be the $H(K)$-orbit of lattices of the form $S^+(L^+,J)$ (see Proposition \ref{prop:trans}). 
Let $X$ be an $H(K)$-orbit of Chevalley-invariant lattices. 
Let $\Lambda$ be an element of $X$, and let $\mfr{C}$ be a Chevalley lattice such that $\Lambda$ is invariant under $\mfr{C}$. 
Then $\Lambda$ is invariant under the action of $\msc{U}_{\mfr{C}}$, hence
\begin{equation} \label{eq:finhorbloc}
\omega^n \bigoplus_{(\psi),\chi} \recht{pr}_{(\psi),\chi}\Lambda \subset \msc{U}_{\mfr{C}} \cdot \Lambda = \Lambda \subset \bigoplus_{(\psi),\chi} \recht{pr}_{(\psi),\chi}\Lambda.
\end{equation}
Since $\mfr{C} = \bigoplus_{\chi} \mfr{C}_{\chi}$, we see that $\Lambda' := \bigoplus_{(\psi),\chi} \recht{pr}_{(\psi),\chi}\Lambda$ is also closed under multiplication by $\mfr{C}$. 
Then (\ref{eq:finhorbloc}) tells that $d(\Lambda,\Lambda') \leq n$ (using the function $d$ from section \ref{ss:dist}). 
Since $K$ is a $p$-adic field, all locally free $R$-modules are in fact free, hence $\Lambda'$ satisfies the conditions of Lemma \ref{lem:chevbound}, and there exists an $h \in H(K)$ such that
\begin{equation*}
S^-(L_0^-,J_0) \subset h \cdot \Lambda' \subset S^+(L_0^+,J_0).
\end{equation*}
It follows from this that $d(h \cdot \Lambda',S^+(L_0^+,J_0)) \leq m$. From this we find
\begin{align*}
d_H(X,P^+) &\leq d_H(X,H(K) \cdot \Lambda') + d_H(H(K) \cdot \Lambda',P^+) \\
&\leq d(\Lambda,\Lambda') +  d(h \cdot \Lambda',S^+(L_0^+,J_0)) \\
&\leq n+m.
\end{align*}
This shows that all $H(K)$-orbits of Chevalley-invariant lattices lie within a ball of radius $n+m$ around $P^+$ in the metric space $(\recht{Lat}_H(V),d_H)$.
By Lemma \ref{lem:dist}.3 this ball is finite, which proves the Proposition.
\end{proof}

\begin{prop} \label{prop:finhorbglob}
Let $K$ be a number field. 
Then for almost all finite places $v$ of $K$ there is exactly one $H(K_v)$-orbit of Chevalley-invariant lattices in $\recht{Lat}(V_{K_v})$.
\end{prop}

\begin{proof}
Fix a Chevalley lattice $\mfr{C} \subset \mfr{g}$ and a $J \in \mathcal{J}$, and let $L^{\pm} = f^{\pm}(\mfr{C})$. 
For a finite place $v$ of $K$ write $\mfr{C}_v := \mfr{C}_{R_v}$ and $J_v = (J_{\psi,R_v})_{\psi \in \mathcal{D}}$. 
Then $\mfr{C}_v$ is a Chevalley lattice in $\mfr{g}_{K_v}$, and we set $L_v^{\pm} := f^{\pm}(\mfr{C}_v)$; 
then it follows from the definitions of $f^{\pm}$ and $S^{\pm}$ that $L_v^{\pm} = (L^{\pm}_{\alpha,R_v})_{\alpha \in \Delta^+}$ and
\begin{equation*}
S^{\pm}(L^{\pm}_v,J_v) = S^{\pm}(L^{\pm},J)_{R_v} \subset V_{K_v}.
\end{equation*}
This shows that $S^{-}(L^{-}_v,J_v) = S^{+}(L^{+}_v,J_v)$ for almost all $v$. 
Furthermore, let $r$ be as in Lemma \ref{lem:projchev}; 
then $v(r) = 0$ for almost all $v$. 
Now let $v$ be such that $S^{-}(L^{-}_v,J_v) = S^{+}(L^{+}_v,J_v)$ and $v(r) = 0$. 
Consider the proof of the previous Proposition, applied to the group $G_{K_v}$ and its representation on $V_{K_v}$, taking $\mfr{C}_0 := \mfr{C}_v$ and $J_0 := J_v$. 
In the notation of that proof we get $m=n=0$, hence $X = P^+$, and there is exactly one orbit of Chevalley-invariant lattices.
\end{proof}

\subsection{Models of split reductive groups}

In this section we apply our results about lattices in representations of Lie algebras to prove Theorem \ref{thm:split}. 
The strategy is to give a bound for the distance between a lattice $\Lambda$ and a Chevalley-invariant lattice in $V$, in terms of the distance between the Lie algebra of $\mathtt{mdl}_G(\Lambda)$ and a Chevalley lattice in $\mfr{g}$. 
Combined with Propositions \ref{prop:finhorbloc} and \ref{prop:finhorbglob} this will give the desired finiteness properties.

\begin{nota}
For the rest of this section, we write $\varrho: U(\mfr{g}) \rightarrow \recht{End}(V)$ for the homomorphism of $K$ algebras induced by the representation $\mfr{g} \rightarrow \mfr{gl}(V)$.
Furthermore, if $(\msc{G},\msc{T})$ be a model of $(G,T)$, $\mfr{G}$ is the Lie algebra of $\msc{G}$, and $\varphi$ is an anchoring of $(\msc{G},\msc{T})$, then we write $\msc{U}_{\mfr{G},\varphi}$ for the $R$-subalgebra of $U(\mfr{g})$ generated by $(\recht{Lie} \ \varphi)(\mfr{G}) \subset \mfr{g}$. We write $\msc{V}_{\mfr{G},\varphi} := \varrho(\msc{U}_{\mfr{G},\varphi})$ and $\msc{V}_{\mfr{C}} := \varrho(\msc{U}_{\mfr{C}})$ for a Chevallley lattice $\mfr{C}$, where $\msc{U}_{\mfr{C}}$ is as in Lemma \ref{lem:nulinv}.
\end{nota}

\begin{lem} \label{lem:ulat}
\begin{enumerate}
\item Let $\Lambda$ be a lattice in $V$, let $(\msc{G},\varphi) = \mathtt{mdl}^{\recht{a}}_G(\Lambda)$ be the anchored model of $G$ associated to $\Lambda$, and let $\mfr{G}$ be the Lie algebra of $\msc{G}$.  
Then $\msc{V}_{\mfr{G},\varphi}$ is a lattice in the $K$-vector space $\varrho(U(\mfr{g}))$.
\item Let $\mfr{C} \subset \mfr{g}$ be a Chevalley lattice.
Then $\msc{V}_{\mfr{C}}$ is a lattice in $\varrho(U(\mfr{g}))$.
\end{enumerate}
\end{lem}

\begin{proof}
\begin{enumerate}
\item The image of $\msc{U}_{\mfr{G},\varphi}$ under $\varrho$ is contained in $\varrho(U(\mfr{g})) \cap \recht{End}(\Lambda)$. 
Since $\recht{End}(\Lambda)$ is a lattice in $\recht{End}(V)$, we see that $\varrho(U(\mfr{g})) \cap \recht{End}(\Lambda)$ is a lattice in $\varrho(U(\mfr{g}))$; 
hence $\msc{V}_{\mfr{G},\varphi}$ is finitely generated. 
On the other hand, $\msc{U}_{\mfr{G},\varphi}$ generates $U(\mfr{g})$ as a $K$-vector space, hence $\msc{V}_{\mfr{G},\varphi}$ spans $\varrho(U(\mfr{g}))$;
 hence it is a lattice in this vector space.
\item Let $\Lambda$ be a lattice invariant under $\mfr{C}$, and let $(\msc{G},\varphi)$ be its associated anchored model of $G$.
Then $\msc{V}_{\mfr{C}}$ is an $R$-submodule of the lattice $\msc{V}_{\mfr{G},\varphi}$ that generates $\varrho(U(\mfr{g}))$ as a $K$-vector space, i.e. a lattice in $\varrho(U(\mfr{g}))$. \qedhere
\end{enumerate}
\end{proof}

\begin{lem} \label{lem:anch}
Let $(\msc{G},\msc{T})$ be a model of $(G,T)$, and let $\mfr{G}$ be the Lie algebra of $\msc{G}$.
Then there is an $r \in R$ such that the following holds:
Let $\varphi$ be any anchoring of $(\msc{G},\msc{T})$ such that  $\msc{V}_{\mfr{G},\varphi}$ is a lattice in $\varrho(U(\mfr{g}))$.
Then there exists a Chevalley lattice $\mfr{C}$ such that $r\msc{V}_{\mfr{C}} \subset \msc{V}_{\mfr{G},\varphi}$.
\end{lem}

The proof of this Lemma consists of three steps: first we prove it in the case that $G$ is a torus, then for semisimple $G$, and then we combine these results to prove the statement for general reductive $G$.

\begin{lem}
Lemma \ref{lem:anch} holds if $G$ is a torus.
\end{lem}
\begin{proof}
There is only one Chevalley lattice, namely the $R$-Lie algebra $\mfr{T}^0$ from section \ref{ss:chevlat}.
The composite map $\mfr{g} \hookrightarrow U(\mfr{g}) \twoheadrightarrow \varrho(U(\mfr{g}))$ is a bijection, and $(\recht{Lie} \ \varphi)(\mfr{G})$ maps to $\msc{V}_{\mfr{G},\varphi}$.
By (\ref{eq:isomchar}) we have $\mfr{g} \cong K \otimes_{\mathbb{Z}} \recht{X}_*(G)$.
Under this identification a $\mu \in \recht{X}_*(G)$ acts on $V_{\chi}$ (for $\chi \in \recht{X}^*(G)$) as $\langle \mu,\chi \rangle$.
Let $M$ be a basis of $\recht{X}_*(G)$, and let $k = \sum_{\mu \in M} k_{\mu} \mu$ be an element of $\mfr{g}$.
If there is a lattice $\Lambda \subset V$ such that $k \cdot \Lambda \subset \Lambda$, then $\sum_{\mu \in M} k_{\mu} \langle \mu, \chi \rangle \in R$ for all $\chi$ present in $V$.
Since $V$ is a faithful representation of $G$, these $\chi$ span $\recht{X}^*(G)$, hence $k_{\mu} \in R$ for all $\mu$, and $k \in \mfr{T}^0$.
Now let $\varphi$ be such that $\msc{V}_{\mfr{G},\varphi}$ is a lattice in $\varrho(U(\mfr{g}))$; then for any lattice $\Lambda \subset V$ the set $\msc{V}_{\mfr{G},\varphi} \cdot \Lambda \subset V$ is a lattice in $V$ which is fixed under the action of $\msc{V}_{\mfr{G},\varphi}$.
As such we find that  $\msc{V}_{\mfr{G},\varphi} \subset \msc{V}_{\mfr{T}^0}$.
Since $\recht{Aut}(G)$is a group of automorphisms of $\mfr{g}$ that sends $\mfr{T}^0$ to itself and permutes the different $(\recht{Lie} \ \varphi)(\mfr{G}) = \msc{V}_{\mfr{G},\varphi}$ transitively, we find that the index $r$ of this inclusion does not depend on the choice of $\varphi$;
this $r$ satisfies the conditions of the Lemma.
\end{proof}

\begin{lem}
Lemma \ref{lem:anch} holds if $G$ is semisimple.
\end{lem}

\begin{proof}
Fix a Chevalley lattice $\mfr{C}$ and an anchoring $\varphi$ of $(\msc{G},\msc{T})$. 
By Lemma \ref{lem:ulat} $\msc{V}_{\mfr{C}}$ is a lattice in $\varrho(U(\mfr{g}))$, 
hence there exists an $r_{\varphi} \in R$ such that $r_{\varphi}\msc{V}_{\mfr{C}} \subset\msc{V}_{\mfr{G},\varphi}$.  
Let $\recht{Aut}(G,T) = \{\sigma \in \recht{Aut}(G): \sigma(T) = T\}$ as in Lemma \ref{lem:chev}, and let $\mathtt{Aut}(G,T)$ be the underlying $K$-group scheme. 
There is a short exact sequence of algebraic groups over $K$
\begin{equation*}
1 \rightarrow G^{\recht{ad}} \rightarrow \mathtt{Aut}(G) \rightarrow \Gamma \rightarrow 1
\end{equation*}
where $\Gamma$ is the automorphism group of the based root datum $(\Psi,\Delta^+)$.
Since $G$ is semisimple, we know that $\Gamma$ is a finite group. 
The kernel of the map $\mathtt{Aut}(G,T) \rightarrow \Gamma$ is the image of the scheme-theoretic normaliser $\mathtt{Norm}_G(T)$ in $G^{\recht{ad}}$; 
its identity component is $\bar{T}$. 
Since $\Gamma$ is finite and the index of $\bar{T}$ in $\recht{ker}(\mathtt{Aut}(G,T) \rightarrow \Gamma)$ is finite, we see that $\bar{T}(K)$ has finite index in $\recht{Aut}(G,T)$. 
Now let $\varphi'$ be another anchoring of $(\msc{G},\msc{T})$. 
There exists a unique $\sigma \in \recht{Aut}(G,T)$ such that $\varphi' = \sigma \circ \varphi$. 
The automorphism $\sigma$ also induces automorphisms of $\mfr{g}$ and $U(\mfr{g})$, which we will still denote by $\sigma$. 
Suppose $\sigma$ is an inner automorphism corresponding to a $t \in \bar{T}(K)$. 
Then $\sigma$ acts as $\chi(t)$ on $U(\mfr{g})_{\chi}$ for every $\chi \in Q$. 
Since $\varrho$ is a homomorphism of $\recht{X}^*(T)$-graded algebras, we get
\[
\msc{V}_{\mfr{G},\varphi'} = \varrho(\sigma(\msc{U}_{\mfr{G},\varphi})) = \varrho(t \cdot \msc{U}_{\mfr{G},\varphi}) = t \cdot \msc{V}_{\mfr{G},\varphi} \supset r_{\varphi} \cdot t \cdot \msc{V}_{\mfr{C}} = r_{\varphi} \msc{V}_{\sigma(\mfr{C})}.
\]
This shows that $r = r_{\varphi}$ has the property that for all $\varphi'$ in the $\bar{T}(K)$-orbit of $\varphi$, there exists a Chevalley lattice $\mfr{C}'$ such that $r\msc{V}_{\mfr{C}'} \subset \msc{V}_{\mfr{G},\varphi'}$. 
Since there are only finitely many such orbits, we can find an $r$ that works for all of them simultaneously.
\end{proof}

\begin{proof}[Proof of Lemma \ref{lem:anch}]
Let $Z$ be the connected component of the center of $G$, and let $\mfr{z}$ be its Lie algebra.
Similarly, let $\msc{Z} \subset \msc{G}$ be the Zariski closure of the connected component of the center of $\msc{G}_K$, and let $\mfr{Z}$ be its Lie algebra.
Then $\msc{Z}$ is a model of $Z$, and any anchoring $\varphi$ of $\msc{Z}$ induces the $R$-Lie subalgebra $(\recht{Lie} \ \varphi)(\mfr{Z}) \subset \mfr{z}$, which generates the $R$-subalgebra $\msc{U}_{\mfr{Z},\varphi}$ of $U(\mfr{z})$.
We let $G'$ be the derived group of $G$, and $T'$ the connected component of $T \cap G'$, and analogous to the above we define $R$-group schemes $\msc{G}'$ and $\msc{T}'$, Lie algebras $\mfr{g}'$, $\mfr{t}'$, $\mfr{G}'$ and $\mfr{T}'$, and the $R$-subalgebra $\msc{U}_{\mfr{G}',\varphi}$ of $U(\mfr{g}')$.
By our results for tori and semisimple groups, this means that there exist $r_1,r_2 \in R$ such that the following holds: 
For each anchoring $\varphi_1$ of $\msc{Z}$ and each anchoring $\varphi_2$ of $(\msc{G}',\msc{T}')$ for which $\varrho(\msc{U}_{\mfr{Z},\varphi_1})$ and $\varrho(\msc{U}_{\mfr{G}',\varphi_2})$ are lattices, there exist Chevalley lattices $\mfr{C}_1 \subset \mfr{z}$ and $\mfr{C}_2 \subset \mfr{g}'$ such that
\begin{align}
r_1\varrho(\msc{U}_{\mfr{C}_1}) \subset \varrho(\msc{U}_{\mfr{Z},\varphi_1}), \label{eq:ulem1} \\
r_2\varrho(\msc{U}_{\mfr{C}_2}) \subset \varrho(\msc{U}_{\mfr{G}',\varphi_2}). \label{eq:ulem2}
\end{align}

Now let $\varphi$ be an anchoring of $(\msc{G},\msc{T})$ such that $\msc{V}_{\mfr{G},\varphi}$ is a lattice. 
Then the restrictions of $\varphi$ to $\msc{Z}$ and to $\msc{G}'$ give anchorings of $\msc{Z}$ and $(\msc{G}',\msc{T}')$. 
For these anchorings the following holds:
\begin{align*}
(\recht{Lie} \ \varphi)(\mfr{G}') &= (\recht{Lie} \ \varphi)(\mfr{G}) \cap \mfr{g}',\\
(\recht{Lie} \ \varphi)(\mfr{Z}) &= (\recht{Lie} \ \varphi)(\mfr{G}) \cap \mfr{z}.
\end{align*}
It follows from this that $\recht{Span}_R(\msc{U}_{\mfr{Z},\varphi}\cdot \msc{U}_{\mfr{G}',\varphi}) \subset \msc{U}_{\mfr{G},\varphi}$, where $\recht{Span}_R(X)$ is to be understood as the $R$-submodule of $U(\mfr{g})$ generated by $A$ and $B$.
In particular this implies that both $\msc{V}_{\mfr{Z},\varphi}$ and $\msc{V}_{\mfr{G}',\varphi}$ are subalgebras of $\msc{V}_{\mfr{G},\varphi}$, hence finitely generated. 
This shows that they are lattices in $\varrho(U(\mfr{z}))$ and $\varrho(U(\mfr{g}'))$, respectively. 
As such, there exist Chevalley lattices $\mfr{C}_1 \subset \mfr{z}$ and $\mfr{C}_2 \subset \mfr{g}'$ such that (\ref{eq:ulem1}) and (\ref{eq:ulem2}) hold.
Then $\mfr{C}_1 \oplus \mfr{C}_2$ is a Chevalley lattice in $\mfr{g}$ and
\[
 r_1r_2 \msc{V}_{\mfr{C}_1 \oplus \mfr{C}_2} =  r_1r_2 \recht{Span}_R(\msc{V}_{\mfr{C}_1} \cdot \msc{V}_{\mfr{C}_2}) \subset  \recht{Span}_R(\msc{V}_{\mfr{Z},\varphi}\cdot \msc{V}_{\mfr{G}',\varphi}) \subset  \msc{V}_{\mfr{G},\varphi}.
\]
This shows that $r = r_1r_2$ satisfies the conditions of the Lemma, as it does not depend on the choice of $\varphi$.
\end{proof}

\begin{prop} \label{prop:torusboundloc}
Suppose $K$ is a $p$-adic field. Then the map $\mathtt{mdl}_{G,T}\colon \recht{Lat}_H(V) \rightarrow \recht{Mdl}(G,T)$ of Lemma \ref{lem:mdldef} is finite.
\end{prop}

\begin{proof}
Let $(\msc{G},\msc{T})$ be a model of $(G,T)$, and let $r$ be as in Lemma \ref{lem:anch}.
Let $\mathcal{P} \subset \recht{Lat}_H(V)$ be the set of $H(K)$-orbits of Chevalley-invariant lattices; 
this is a finite set by Proposition \ref{prop:finhorbloc}. 
Let $X$ be an $H(K)$-orbit of lattices in $V$ such that $\mathtt{mdl}_{G,T}(X) = (\msc{G},\msc{T})$. 
Let $\Lambda \in X$, and let $\varphi$ be the anchoring of $(\msc{G},\msc{T})$ induced by $\Lambda$. 
Then $\Lambda$ is invariant under the action of $\msc{V}_{\mfr{G},\varphi}$.
Let $\mfr{C}$ be a Chevalley lattice in $\mfr{g}$ such that $r\msc{V}_{\mfr{C}} \subset \msc{V}_{\mfr{G},\varphi}$, and let $\Lambda' = \msc{V}_{\mfr{C}} \cdot \Lambda \subset V$. 
Since $\msc{V}_{\mfr{C}}$ is a finitely generated submodule of $\recht{End}(V)$, we see that $\Lambda'$ is a lattice in $V$ containing $\Lambda$ that is invariant under $\mfr{C}$. Furthermore we see
\[
\Lambda \subset  \Lambda' = \msc{V}_{\mfr{C}}\Lambda \subset r\msc{V}_{\mfr{G},\varphi}\Lambda = r\Lambda,
\]
hence $d(\Lambda,\Lambda') \leq v(r)$, where $v$ is the valuation on $K$.
 For the metric space $\recht{Lat}_H(V)$ this implies that $X$ is at most distance $v(r)$ from an element of $\mathcal{P}$. 
 Since $\mathcal{P}$ is finite and balls are finite in this metric space, we see that there are only finitely many possibilities for $X$.
\end{proof}

\begin{lem} \label{lem:torusboundglob}
Suppose $K$ is a number field. 
Then for almost all finite places $v$ of $K$ there is exactly one $H(K_v)$-orbit $X$ of lattices in $V_{K_v}$ such that $\mathtt{mdl}_{G_{K_v},T_{K_v}}(X)$ is the Chevalley model of $(G_{K_v},T_{K_v})$.
\end{lem}

\begin{proof}
Let $(\msc{G},\msc{T})$ be the Chevalley model of $(G,T)$, let $\varphi$ be some anchoring of $(\msc{G},\msc{T})$, and let $\mfr{C} \subset \mfr{g}$ be a Chevalley lattice. 
Then $\mfr{G}_{\varphi,R_v} = \mfr{C}_{R_v}$ as lattices in $\mfr{g}_{K_v}$ for almost all finite places $v$ of $K$. 
Hence for these $v$, the Lie algebra of the Chevalley model of $(G_{K_v},T_{K_v})$ is a Chevalley lattice via the embedding induced by the anchoring $\varphi$. 
However, two anchorings differ by an automorphism in $\recht{Aut}(G_{K_v},T_{K_v})$. 
Since the action of $\recht{Aut}(G_{K_v},T_{K_v})$ on $\recht{Lat}(\mfr{g}_{K_v})$ sends Chevalley lattices to Chevalley lattices by Lemma \ref{lem:chev}.3, this means that for these $v$ the Lie algebra of the Chevalley model will be a Chevalley lattice with respect to every anchoring. 
For these $v$, a lattice in $V_{K_v}$ yielding the Chevalley model must be Chevalley-invariant.
On the other hand, by Proposition \ref{prop:finhorbglob} for almost all $v$ there is precisely one $H(K_v)$-orbit of Chevalley-invariant lattices;
combining this, we find that for almost all $v$ there is at most one $H(K_v)$-orbit of lattices yielding the Chevalley model.
For existence, note that any model of $G$ will be reductive on an open subset of $\recht{Spec}(R)$, and any model of $T$ will be a split torus on an open subset of $\recht{Spec}(R)$. 
This shows that any model of $(G,T)$ is isomorphic to the Chevalley model over almost all $R_v$. 
It follows that for almost all $v$ there is at least one lattice yielding the Chevalley model.
\end{proof}

\begin{proof}[Proof of Theorem \ref{thm:split}]
\begin{enumerate}
\item Let $\msc{G}$ be a given model of $G$. 
Let $T$ be a split maximal torus of $G$, and choose a subgroup scheme $\msc{T} \subset \msc{G}$ such that $(\msc{G},\msc{T})$ is a model of $(G,T)$.
Let $\Lambda'$ be a lattice in $V$ with model $\mathtt{mdl}_{G,T}(\Lambda') = (\msc{G}',\msc{T}')$ and associated anchoring $\varphi'\colon\msc{G}'_K \DistTo G$.
Suppose there exists an isomorphism $\psi\colon \msc{G} \DistTo \msc{G}'$. 
Then $\psi(\msc{T}_K)$ is a split maximal torus of $\msc{G}'_K$. Since all split maximal tori of a split reductive group are conjugate (see \cite[Thm.~15.2.6]{springer1998}), there exists a $g \in \msc{G}'(K)$ such that $\psi(\msc{T}_K) = g\msc{T}'_Kg^{-1}$. 
Then $\recht{inn}(g) \circ \psi$ is an isomorphism of models of $(G,T)$ between $(\msc{G},\msc{T})$ and $\mathtt{mdl}_{G,T}(\varphi(g)\Lambda')$. 
By Proposition \ref{prop:torusboundloc} there are only finitely many $H(K)$-orbits yielding $(\msc{G},\msc{T})$, 
so $g\Lambda'$ can only lie in finitely many $H(K)$-orbits; 
hence $\Lambda'$ can only lie in finitely many $(G \cdot H)(K)$-orbits. Since $G \cdot H$ is a subgroup of $N$, this shows that there are only finitely many $N(K)$-orbits in $\recht{Lat}(V)$ yielding the model $\msc{G}$ of $G$.
\item Let $T$ be a split maximal torus of $G$. 
By Lemma \ref{lem:torusboundglob} for almost all finite places $v$ of $K$ there exists exactly one $H(K_v)$-orbit $Y_v \subset \recht{Lat}(V_{K_v})$ yielding the Chevalley model of $(G_{K_v},T_{K_v})$; 
let $v$ be such a place. 
Repeating the proof of the previous point, we see that $g\Lambda'$ has to lie in $Y_v$, hence $\Lambda'$ has to lie in $(G \cdot H)(K_v) \cdot Y_v$, and in particular in the single $N(K_v)$-orbit $N(K_v) \cdot Y_v$. \qedhere
\end{enumerate}
\end{proof}

\section{Nonsplit reductive groups} \label{s:local}

The main goal of this section is to prove Theorem \ref{thm:main1} for local fields, as well as a stronger finiteness result related to Theorem \ref{thm:split}.2 needed to prove Theorem \ref{thm:main1} for number fields, for connected reductive groups that are not required to be split.
We will make use of some Bruhat--Tits theory to prove the key Lemma \ref{lem:qss}.

\subsection{Bruhat--Tits buildings}

In this subsection we give a very brief summary of the part of Bruhat--Tits theory that is revelant to our purposes; 
Bruhat--Tits theory will only play a role in the proof of Lemma \ref{lem:qss}.
If $\Delta$ is a simplicial complex, I denote its topological realisation by $|\Delta|$.

\begin{thm} \label{thm:buildings1} 
\emph{(See \cite[Cor.~2.1.6; Lem.~2.5.1; 2.5.2]{bruhattits1972}, \cite[{}2.2.1]{tits1979} and \cite[Th.~VI.3A]{brown1989}.)}
Let $G$ be a connected semisimple algebraic group over a $p$-adic field $K$. 
Then there exists a locally finite simplicial complex $\mathcal{I}(G,K)$ with the following properties:
\begin{enumerate}
\item $\mathcal{I}(G,K)$ has finite dimension;
\item Every simplex is contained in a simplex of dimension $\recht{dim}(\mathcal{I}(G,K))$, and these maximal simplices are called \emph{chambers};
\item There is an action of $G(K)$ on $\mathcal{I}(G,K)$ that induces a proper and continuous action on $|\mathcal{I}(G,K)|$, where $G(K)$ is endowed with the $p$-adic topology;
\item The stabilisers of points in $|\mathcal{I}(G,K)|$ are compact open subgroups of $G(K)$;
\item $G(K)$ acts transitively on the set of chambers of $\mathcal{I}(G,K)$;
\item There is a metric $d$ on $|\mathcal{I}(G,K)|$ invariant under the action of $G(K)$ that gives the same topology as its topological realisation. \qedhere
\end{enumerate}
\end{thm}

\begin{rem} \label{rem:btdiscrete}
Since the stabiliser of each point is an open subgroup of $G(K)$, the $G(K)$-orbits in $|\mathcal{I}(G,K)|$ are discrete subsets.
\end{rem}

\begin{cor} \label{cor:btcompact}
Let $G$ be a semisimple algebraic group over a $p$-adic field $K$, let $C \subset |\mathcal{I}(G,K)|$ be a chamber, let $\bar{C} \subset |\msc{I}(G,K)|$ be its closure, and let $r \in \mathbb{R}_{>0}$. 
Then the subset $V \subset |\mathcal{I}(G,K)|$ given by 
\begin{equation*}
V := \Big\{x \in |\mathcal{I}(G,K)|:d(x,\bar{C}) \leq r\Big\}
\end{equation*}
 is compact.
\end{cor}

\begin{proof}
Since the metric of $|\mathcal{I}(G,K)|$ is invariant under the action of $G(K)$ and $G(K)$ acts transitively on the set of chambers, we see that every chamber has the same size. 
Since $\mathcal{I}(G,K)$ is locally finite this means that $V$ will only meet finitely many chambers. The union of the closures of these chambers is compact, hence $V$, being a closed subset of this, is compact as well.
\end{proof}

\begin{thm} \label{thm:buildings2}
\emph{(See \cite[Prop.~2.4.6; Cor.~5.2.2; Cor.~5.2.8]{rousseau1977})}
Let $G$ be a connected semisimple algebraic group over a $p$-adic field $K$, and let $L/K$ be a finite Galois extension.
\begin{enumerate}
\item The simplicial complex $\mathcal{I}(G,L)$ has a natural action of $\recht{Gal}(L/K)$;
\item The map $G(L) \times \mathcal{I}(G,L) \rightarrow \mathcal{I}(G,L)$ that gives the $G(L)$-action on $\mathcal{I}(G,L)$ is $\recht{Gal}(L/K)$-equivariant;
\item There is a canonical inclusion $\mathcal{I}(G,K) \hookrightarrow \mathcal{I}(G,L)^{\recht{Gal}(L/K)}$, which allows us to view $\mathcal{I}(G,K)$ as a subcomplex of $\mathcal{I}(G,L)$;
\item There is an $r \in \mathbb{R}_{>0}$ such that for every $x \in |\mathcal{I}(G,L)|^{\recht{Gal}(L/K)}$ there exists an $y \in |\mathcal{I}(G,K)|$ such that $d(x,y) \leq r$.
\end{enumerate}
\end{thm}

\subsection{Compact open subgroups and quotients}

Let $G$ be an algebraic group over a $p$-adic field $K$, and let $L$ be a finite Galois extension of $K$. 
Let $U$ be a compact open subgroup of $G(L)$ that is invariant under the action of $\recht{Gal}(L/K)$. 
Then $G(L)/U$ inherits an action of $\recht{Gal}(L/K)$, and its set of invariants $(G(L)/U)^{\recht{Gal}(L/K)}$ has a left action of $G(K)$. The goal of this section is to show that the quotient $G(K)\backslash (G(L)/U)^{\recht{Gal}(L/K)}$ is finite for various choices of $G$, $K$, $L$ and $U$. 
We will also show that it has cardinality $1$ if we choose $U$ suitably `nice'.

\begin{nota}
Let $G$ be an algebraic group over a $p$-adic field $K$, let $L/K$ be a finite Galois extension over which $G$ splits, and let $U$ be a compact open subgroup of $G(L)$ (with respect to the $p$-adic topology) fixed under the action of $\recht{Gal}(L/K)$. 
Then we write $Q^{L/K}_G(U) := G(K) \backslash (G(L)/U)^{\recht{Gal}(L/K)}$.
\end{nota}

The next Lemma tells us that compact open subgroups often appear in the contexts relevant to us.

\begin{lem} \label{lem:compop}
\emph{(See \cite[p.~134]{platonovrapinchuk1994})}
Let $G$ be an algebraic group over a $p$-adic field $K$, and let $L$ be a finite field extension of $K$. 
Let $(\msc{G},\varphi)$ be an anchored model of $G$.
Then $\varphi(\msc{G}(\msc{O}_L))$ is a compact open subgroup of $G(L)$ with respect to the $p$-adic topology. \qed
\end{lem}

We will now prove that $Q_G^{L/K}(U)$ is finite for reductive $G$. 
To prove this we first prove it for tori and for semisimple groups, and then combine these results.

\begin{lem} \label{lem:oneu}
Let $G$ be an algebraic group over a $p$-adic field $K$, and let $L/K$ be a finite Galois extension over which $G$ splits. 
If $Q^{L/K}_G(U)$ is finite for some compact open Galois invariant $U \subset G(L)$, then it is finite for all such $U$.
\end{lem}

\begin{proof}
This follows from the fact that if $U$ and $U'$ are compact open Galois invariant subgroups of $G(L)$, then $U'' := U \cap U'$ is as well, and $U''$ has finite index in both $U$ and $U'$.
\end{proof}

\begin{lem} \label{lem:qtorus}
Let $T$ be a torus over a $p$-adic field $K$, and let $L$ be a finite Galois extension of $K$ over which $T$ splits. 
Let $U$ be a compact open subgroup of $T(L)$. Then $Q^{L/K}_T(U)$ is finite.
\end{lem}

\begin{proof}
Choose an isomorphism $\varphi\colon T_L \DistTo \mathbb{G}_{\recht{m},L}^d$. 
Then $T(L)$ has a unique maximal compact open subgroup, namely $U = \varphi^{-1}((\msc{O}_L^{\times})^d)$; 
by Lemma \ref{lem:oneu} it suffices to prove this Lemma for this $U$.
Let $f$ be the ramification index of $L/K$, and let $\omega$ be a uniformiser of $L$ such that $\omega^f \in K$. 
Now consider the homomorphism of abelian groups
\begin{align*}
F\colon \recht{X}_*(T) &\rightarrow T(L)/U \\
\eta &\mapsto \eta(\omega)U.
\end{align*}
For every cocharacter $\eta$ the subgroup $\eta(\msc{O}_L^\times)$ of $T(L)$ is contained in $U$. 
This implies that for all $\eta \in \recht{X}_*(T)$ and all $\pi \in \recht{Gal}(L/K)$ one has
\begin{align*}
F(\pi \cdot \eta) &= \pi(\eta(\pi^{-1}\omega)))U \\
&= \pi(\eta(\omega)) \cdot \pi \left(\eta\left(\frac{\pi^{-1}\omega}{\omega}\right)\right)U\\
&= \pi(F(\eta)),
\end{align*}
since $\frac{\pi^{-1}\omega}{\omega} \in \msc{O}_L^\times$. 
This shows that $F$ is Galois-equivariant. 
On the other hand $\varphi$ induces isomorphisms of abelian groups $\recht{X}_*(T) \cong \mathbb{Z}^d$ and $T(L)/U \cong (L^{\times}/\msc{O}_L^{\times})^d = \langle \omega \rangle^d$. In terms of these identifications the map $F$ is given by
\begin{equation*}
\mathbb{Z}^d \ni (x_1,\ldots,x_d) \mapsto (\omega^{x_1},\ldots,\omega^{x_d}) \in \langle \omega \rangle^d \cong T(L)/U.
\end{equation*}
We see from this that $F$ is an isomorphism of abelian groups with an action of $\recht{Gal}(L/K)$. 
Let $t \in T(L)/U$ be Galois invariant, and let $\eta = F^{-1}(t) \in \recht{X}_*(T)^{\recht{Gal}(L/K)}$; 
then $\eta$ is a cocharacter that is defined over $K$. 
By definition we have $\omega^f \in K$, hence $F(f \cdot \eta) = \eta(\omega^f)$ is an element of $T(K)$. 
This shows that the abelian group $\recht{X}_*(T)^{\recht{Gal}(L/K)}/F^{-1}(T(K) \cdot U)$ is annihilated by $f$. 
Since it is finitely generated, it is finite. 
Furthermore, the map $F$ induces a bijection
\begin{equation*}
\recht{X}_*(T)^{\recht{Gal}(L/K)}/F^{-1}(T(K)) \DistTo Q^{L/K}_T(U),
\end{equation*}
hence $Q^{L/K}_G(U)$ is finite.
\end{proof}

\begin{lem} \label{lem:qss}
Let $G$ be a (connected) semisimple group over a $p$-adic field $K$, and let $L$ be a finite Galois extension over which $G$ splits. 
Let $U$ be a Galois invariant compact open subgroup of $G(L)$. 
Then $Q_{G}^{L/K}(U)$ is finite.
\end{lem}

\begin{proof}
By Lemma \ref{lem:oneu} it suffices to show this for a chosen $U$. 
Let $\mathcal{I}(G,K)$ be the Bruhat--Tits building of $G$ over $K$, and let $\mathcal{I}(G,L)$ be the Bruhat--Tits building of $G$ over $L$. 
Choose a point $x \in |\mathcal{I}(G,K)| \subset |\mathcal{I}(G,L)|^{\recht{Gal}(L/K)}$; 
its stabiliser $U \subset G(L)$ is a Galois invariant compact open subgroup of $G(L)$ by Theorems \ref{thm:buildings1}.4 and \ref{thm:buildings2}.2. 
Now we can identify $Q^{L/K}_G(U)$ with 
\begin{equation*}
G(K) \backslash (G(L) \cdot x)^{\recht{Gal}(L/K)},
\end{equation*}
so it suffices to show that this set is finite. 
Let $y \in (G(L) \cdot x)^{\recht{Gal}(L/K)}$, and let $r$ be as in Theorem \ref{thm:buildings2}.4. 
Then there exists a $z \in \mathcal{I}(G,K)$ such that $d(y,z) \leq r$. 
Now fix a chamber $C$ of $\mathcal{I}(G,K)$, and let $g \in G(K)$ such that $gz \in \bar{C}$ (see Theorem \ref{thm:buildings1}.5). 
Then $d(gy,\bar{C}) \leq r$, so $gy$ lies in the set $D = \{v \in |\msc{I}(G,L)| : d(v,\bar{C}) \leq r\}$, which is compact by Corollary \ref{cor:btcompact}. 
On the other hand the action of $G(L)$ on $|\mathcal{I}(G,L)|$ has discrete orbits by Remark \ref{rem:btdiscrete}, 
so $G(L)\cdot x$ intersects $D$ in only finitely many points. 
Hence there are only finitely many possibilities for $gy$, so $G(K) \backslash (G(L) \cdot x)^{\recht{Gal}(L/K)}$ is finite, as was to be shown.
\end{proof}

\begin{prop} \label{prop:qred}
Let $G$ be a connected reductive group over a $p$-adic field $K$, and let $L$ be a finite Galois extension of $K$ over which $G$ splits. 
Let $U$ be a Galois invariant subgroup of $G(L)$. 
Then $Q_G^{L/K}(U)$ is finite.
\end{prop}

\begin{proof}
Let $G'$ be the semisimple group $G^{\recht{der}}$, and let $G^{\recht{ab}}$ be the torus $G/G'$. 
This gives us an exact sequence
\begin{equation*}
1 \rightarrow G'(K) \rightarrow G(K) \stackrel{\psi}{\rightarrow} G^\recht{ab}(K) \rightarrow \recht{H}^1(G',K).
\end{equation*}
The image $\psi(U) \subset G^{\recht{ab}}(L)$ is compact. 
It is also open: 
If $Z$ is the centre of $G$, then the map $\psi\colon Z \rightarrow G^{\recht{ab}}$ is an isogeny, and since $Z(L) \cap U$ is open in $Z(L)$, its image in $G^{\recht{ab}}$ is open as well. 
Hence by Lemma \ref{lem:qtorus} we know that $Q_{G^\recht{ab}}^{L/K}(\psi(U))$ is finite. 
Furthermore, by \cite[{}III.4.3]{serre1997} $\recht{H}^1(G',K)$ is finite, hence the image of $G(K)$ in $G^{\recht{ab}}(K)$ has finite index. 
If we let $G(K)$ act on $(G^{\recht{ab}}(L)/\psi(U))^{\recht{Gal}(L/K)}$ via $\psi$, we now find that the quotient set $G(K) \backslash (G^{\recht{ab}}(L)/\psi(U))^{\recht{Gal}(L/K)}$ is finite. 
The projection map $\psi\colon (G(L)/U)^{\recht{Gal}(L/K)} \rightarrow (G^{\recht{ab}}(L)/\psi(U))^{\recht{Gal}(L/K)}$ is $G(K)$-equivariant, so we get a map of $G(K)$-quotients $\bar{\psi}\colon Q_G^{L/K}(U) \rightarrow G(K)\backslash(G^{\recht{ab}}(L)/\psi(U))^{\recht{Gal}(L/K)}$. 
To show that $Q_G^{L/K}(U)$ is finite it now suffices to show that the map $\bar{\psi}$ is finite.

Let $x \in Q_G^{L/K}(U)$; we need to prove that there exist only finitely many $y \in Q_G^{L/K}(U)$ such that $\bar{\psi}(x) = \bar{\psi}(y)$.
Choose a representative $\tilde{x}$ of $x$ in $G(L)$; then there exists a representative $\tilde{y}$ of $y$ in $G(L)$ such that $\tilde{x} = \tilde{y}$ in $G^{\recht{ab}}(L)$.
Hence there is a $g' \in G'(L)$ such that $g'\tilde{x} = \tilde{y}$. 
Since $\tilde{x}U$ and $\tilde{y}U$ are Galois invariant, the element $g'$ is Galois invariant in $G'(L)/(G'(L) \cap \tilde{x}U\tilde{x}^{-1})$; 
this makes sense because the compact open subgroup $G'(L) \cap \tilde{x}U\tilde{x}^{-1}$ of $G'(L)$ is Galois-invariant. 
Furthermore the element $y$ only depends on the choice of $g'$ in
\begin{equation*}
G'(K) \backslash (G'(L)/(G'(L) \cap \tilde{x}U\tilde{x}^{-1}))^{\recht{Gal}(L/K)} = Q_{G'}^{L/K}(G'(L) \cap \tilde{x}U\tilde{x}^{-1}).
\end{equation*}
Since this set is finite by Lemma \ref{lem:qss} there are only finitely many possibilities for $y$ for a given $x$. 
This proves the Proposition.
\end{proof}

The final Proposition of this section is a stronger version of Proposition \ref{prop:qred} in the case that the compact open subgroup $U$ comes from a `nice' model of $G$. 
We need this to prove a stronger version of Theorem \ref{thm:main1} over local fields in the case that we have models over a collection of local fields coming from the places of some number field (compare Theorem \ref{thm:split}.2).

\begin{prop} \label{smthfin}
Let $K$ be a $p$-adic field, and let $\msc{G}$ be a smooth group scheme over $\msc{O}_K$ whose generic fibre is reductive and splits over an unramified Galois extension $L/K$. 
Then $\#Q_{\msc{G}_K}^{L/K}(\msc{G}(\msc{O}_L)) = 1$.
\end{prop}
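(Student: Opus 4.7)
The plan is to realise $Q(\msc{G}_K,L/K,\msc{G}(\msc{O}_L))$ inside the non-abelian Galois cohomology of $\Gamma := \recht{Gal}(L/K)$ and show that this cohomology is trivial. The short exact sequence of pointed $\Gamma$-sets
\[1 \rightarrow \msc{G}(\msc{O}_L) \rightarrow \msc{G}_K(L) \rightarrow \msc{G}_K(L)/\msc{G}(\msc{O}_L) \rightarrow 1\]
induces an exact sequence
\[\msc{G}_K(K) \rightarrow (\msc{G}_K(L)/\msc{G}(\msc{O}_L))^{\Gamma} \rightarrow H^1(\Gamma,\msc{G}(\msc{O}_L)),\]
so it suffices to prove $H^1(\Gamma,\msc{G}(\msc{O}_L)) = *$. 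Because $L/K$ is unramified, $\Gamma$ is canonically identified with $\recht{Gal}(k_L/k_K)$, which is finite cyclic and generated by Frobenius.

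Next I reduce modulo the maximal ideal $\mathfrak{m}_L$ of $\msc{O}_L$. This yields a $\Gamma$-equivariant short exact sequence
\[1 \rightarrow \msc{G}^+(\msc{O}_L) \rightarrow \msc{G}(\msc{O}_L) \rightarrow \msc{G}(k_L) \rightarrow 1,\]
with surjectivity on the right coming from Hensel's lemma for the smooth scheme $\msc{G}$. Taking cohomology reduces the problem to the vanishing of both $H^1(\Gamma,\msc{G}(k_L))$ and $H^1(\Gamma,\msc{G}^+(\msc{O}_L))$. For the first, $\msc{G}_{k_K}$ is a smooth algebraic group over the finite field $k_K$ whose generic fibre is connected (a connected reductive group in the settings in which this proposition will be applied), so the special fibre is likewise connected, and Lang's theorem gives the vanishing.

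For the second I filter $\msc{G}^+(\msc{O}_L)$ by the congruence subgroups $\msc{G}^n(\msc{O}_L) := \recht{ker}(\msc{G}(\msc{O}_L) \rightarrow \msc{G}(\msc{O}_L/\mathfrak{m}_L^n))$. Smoothness of $\msc{G}$ identifies each successive quotient with
\[\msc{G}^n(\msc{O}_L)/\msc{G}^{n+1}(\msc{O}_L) \;\cong\; \recht{Lie}(\msc{G})_{k_K} \otimes_{k_K} \mathfrak{m}_L^n/\mathfrak{m}_L^{n+1},\]
a free $k_L$-module with semilinear $\Gamma$-action; its positive-degree cohomology vanishes by the normal basis theorem (additive Hilbert~90). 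Since each graded piece is abelian, lifting a given $\Gamma$-invariant cocycle stage by stage through the filtration, and using the $\mathfrak{m}_L$-adic completeness $\msc{G}^+(\msc{O}_L) = \varprojlim_n \msc{G}^+(\msc{O}_L)/\msc{G}^n(\msc{O}_L)$, forces $H^1(\Gamma,\msc{G}^+(\msc{O}_L)) = *$.

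The main obstacle is this last inverse-limit step: non-abelian $H^1$ is not automatically compatible with inverse limits, so some care is needed to propagate the triviality on each finite-level quotient to the limit. The key point making the argument go through is that the successive quotients $\msc{G}^n/\msc{G}^{n+1}$ are abelian, so each lifting step is controlled by an abelian $H^1$ which vanishes by Hilbert~90, and the Mittag--Leffler condition supplied by the filtration guarantees convergence.
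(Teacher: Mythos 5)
Your argument is correct, but it takes a noticeably different route from the paper. The paper goes geometric: since $L/K$ is unramified, $\recht{Spec}(\msc{O}_L) \to \recht{Spec}(\msc{O}_K)$ is a finite \'etale Galois cover with group $\Gamma$, so a Galois-invariant coset $g\msc{G}(\msc{O}_L)$ gives descent data for a $\msc{G}$-torsor $B$ over $\recht{Spec}(\msc{O}_K)$; Lang's theorem trivialises the special fibre $B_k$, and smoothness of $B$ (inherited from $\msc{G}$) lifts a $k$-point to an $\msc{O}_K$-point, so $B$ is trivial and the coset has a $K$-rational representative. Your proof instead expands the same content into explicit non-abelian cohomology: the exact sequence of pointed sets reduces the claim to $H^1(\Gamma,\msc{G}(\msc{O}_L))=*$; the reduction-mod-$\mathfrak{m}_L$ sequence splits this into the residue-field piece, handled by Lang, and the kernel $\msc{G}^+(\msc{O}_L)$, handled by filtering by congruence subgroups and observing that each graded piece is an induced semilinear module whose $H^1$ vanishes by additive Hilbert~90. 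The payoff of the paper's formulation is brevity: smoothness is used only once, as one Hensel lifting from $B(k)$ to $B(\msc{O}_K)$, with no filtration bookkeeping. The payoff of yours is that it makes the formal-smoothness lifting explicit and checks directly that the pro-$p$ convergence works; this is useful if one wants to quote or adapt the statement in contexts where one does not want to pass through the torsor formalism.

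Two small points you should sharpen. First, the assertion ``the generic fibre is connected, so the special fibre is likewise connected'' is not a valid implication for smooth group schemes over $\msc{O}_K$ in general (think of N\'eron models of tori). The paper also needs connectedness of $\msc{G}_k$ to apply Lang's theorem, and silently relies on it; in the sole application of the proposition the model $\msc{N}^0_v$ is assumed reductive, so the special fibre is indeed connected, but this is an extra input, not a consequence of connectedness of the generic fibre. Second, your identification of $\msc{G}^n(\msc{O}_L)/\msc{G}^{n+1}(\msc{O}_L)$ with $\recht{Lie}(\msc{G})\otimes \mathfrak{m}_L^n/\mathfrak{m}_L^{n+1}$ and the convergence of the recursive coboundary construction both rest on smoothness and $\mathfrak{m}_L$-adic completeness of $\msc{O}_L$; it would be worth making explicit that the product $g_1 g_2 \cdots$ of the successive correcting elements converges in $\msc{G}^+(\msc{O}_L)$ precisely because $g_m \in \msc{G}^m(\msc{O}_L)$, so that the non-abelian Mittag--Leffler concern you raise is genuinely resolved.
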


\begin{proof}
Let $k$ be the residue field of $K$. 
Let $g \in \msc{G}(L)$ such that $g\msc{G}(\msc{O}_L)$ is Galois-invariant; 
we need to show that  $g\msc{G}(\msc{O}_L)$ has a point defined over $K$. 
Since $L/K$ is unramified, we see that $\recht{Gal}(L/K)$ is the étale fundamental group of the covering $\recht{Spec}(\msc{O}_L)/\recht{Spec}(\msc{O}_K)$. 
As such $g\msc{G}(\msc{O}_L)$ can be seen as the $\msc{O}_L$-points of a $\msc{G}$-torsor $\msc{B}$ over $\recht{Spec}(\msc{O}_K)$ in the sense of \cite[{}III.4]{milne2016}. 
By Lang's theorem the $\msc{G}_k$-torsor $\msc{B}_k$ is trivial, hence $\msc{B}(k)$ is nonempty. 
Since $\msc{G}$ is smooth over $\msc{O}_K$, so is $\msc{B}$, and we can lift a point of $\msc{B}(k)$ to a point of $\msc{B}(\msc{O}_K)$. 
Hence $g\msc{G}(\msc{O}_L)$ has an $\msc{O}_K$-point, as was to be shown.
\end{proof}

\subsection{Models of reductive groups}

In this subsection we prove Theorem \ref{thm:main1} over local fields, plus a stronger statement for local fields coming from one number field.
We need this to prove Theorem \ref{thm:main1} for number fields.

\begin{thm} \label{thm:local}
Let $G$ be a connected reductive group over $K$. 
Let $V$ be a faithful representation of $G$, and regard $G$ as an algebraic subgroup of $\mathtt{GL}(V)$. 
Let $N$ be the scheme-theoretic normaliser of $G$ in $\mathtt{GL}(V)$.
\begin{enumerate}
\item Let $K$ be a $p$-adic field. 
Then the map $\mathtt{mdl}_G\colon \recht{Lat}_N(V) \rightarrow \recht{Mdl}(G)$ of Lemma \ref{lem:mdldef} is finite.
\item Let $K$ be a number field. 
Then there exists a finite Galois extension $L$ of $K$ over which $G$ splits with the following property: 
For almost all finite places $v$ of $K$ there is exactly one $N(K_v)$-orbit $X_v$ of lattices in $V_{K_v}$ such that $\mathtt{mdl}_{G_{K_v}}(X_v)_{\msc{O}_{L_w}}$ is the Chevalley model of $G_{L_w}$ for all places $w$ of $L$ over $v$.
\end{enumerate}
\end{thm}

\begin{proof} Let $N^0$ and $\pi_0(N)$ be the identity component and component group of $N$, respectively.
\begin{enumerate}
\item Let $L/K$ be a Galois extension over which $G$ splits. 
Let $R$ and $S$ be the rings of integers of $K$ and $L$, respectively.
Then we have the following commutative diagram:

\begin{center}
\begin{tikzpicture}[description/.style={fill=white,inner sep=2pt}]
\matrix (m) [matrix of math nodes, row sep=3em,
column sep=6em, text height=1.5ex, text depth=0.25ex]
{ \recht{Lat}_{N^0}(V) & \recht{Lat}_{N^0}(V_L) \\
\recht{Lat}_{N}(V) & \recht{Lat}_{N}(V_L) \\
\recht{Mdl}(G) & \recht{Mdl}(G_L) \\ };
\path[->>] 
(m-1-1) edge (m-2-1)
(m-1-2) edge (m-2-2);
\path[->,font=\scriptsize]
(m-1-1) edge node[auto] {$ S \otimes_{R} - $} (m-1-2)
(m-2-1) edge node[auto] {$ S \otimes_{R} - $} (m-2-2)
edge node[auto]{$\mathtt{mdl}_{G}$}(m-3-1)
(m-2-2) edge node[auto]{$\mathtt{mdl}_{G_L}$} (m-3-2)
(m-3-1) edge node[auto] {$\recht{Spec} S \times_{\recht{Spec} R} -$} (m-3-2);
\end{tikzpicture}
\end{center}
By Theorem \ref{thm:split}.1 we know that the map on the lower right is finite. 
Furthermore, since $N^0$ is of finite index in $N$, we know that the maps on the upper left and upper right are finite and surjective. 
To show that the map on the lower left is finite, it now suffices to show that the top map is finite. 
Let $\Lambda$ be a lattice in $V$. 
The $N^0(L)$-orbit of $\Lambda_S$ in $\recht{Lat}(V_L)$ is a Galois-invariant element of $\recht{Lat}_{N^0}(V_L)$. 
As a set with an $N^0(L)$-action and a Galois action, this set is isomorphic to $N^0(L)/U$, where $U \subset N^0(L)$ is the stabiliser of $\Lambda_S$; 
this is a compact open Galois-invariant subgroup of $N^0(L)$. 
If $\Lambda' \in \recht{Lat}(V)$ is another lattice such that $\Lambda'_S \in N^0(L) \cdot \Lambda_S$, then $\Lambda'_S$ corresponds to a Galois-invariant element of $N^0(L)/U$. 
By \cite{user27056} we know that $N^0$ is reductive, hence $Q_{N^0}^{L/K}(U)$ is finite by Proposition \ref{prop:qred}. 
This shows that, given $\Lambda$, there are only finitely many options for $N^0(K) \cdot \Lambda'$. 
Hence the top map of the above diagram is finite, as was to be shown. 
 
 \item Let $L/K$ be finite Galois such that the map $N(L) \rightarrow \pi_0(N)(\bar{K})$ is surjective. 
 Choose a lattice $\Lambda \in \recht{Lat}(V)$. 
Let $\msc{N}^0$ be the model of $N^0$ induced by $\Lambda$. 
Let $\msc{N}^0_v := \msc{N}^0_{R_v}$; this is the model of $N^0_{K_v}$ induced by $\Lambda_{R_v} \subset V_{K_v}$. 
For almost all $v$ the $R_v$-group scheme $\msc{N}^0_v$ is reductive. 
Since $G_L$ is split, for almost all places $w$ of $L$ the model of $G_{L_w}$ associated to $\Lambda_{S_w}$ is the Chevalley model.
Furthermore, let $n_1,\ldots,n_r \in N(L)$ be a set of representatives of $\pi_0(N)(\bar{K})$; then for every place $w$ of $L$ we have
 \begin{equation*}
 N(L_w) \cdot \Lambda_{S_w} = \bigcup_{i=1}^r N^0(L_w)n_i \cdot \Lambda_{S_w}.
 \end{equation*}
 For almost all $w$ all the lattices $n_i \cdot \Lambda_{S_w}$ coincide, hence for those $w$ we have $N(L_w) \cdot \Lambda_{S_w} = N^0(L_w) \cdot \Lambda_{S_w}$. 
 Now let $v$ be a finite place of $K$ satisfying the following conditions:
 \begin{itemize}
 \item For every place $w$ of $L$ above $v$, the $N(L_w)$-orbit of lattices $N(L_w) \cdot \Lambda_{S_w}$ is the only orbit of lattices in $V_{L_w}$ inducing the Chevalley model of $G_{L_w}$;
 \item  for every place $w$ of $L$ above $v$ we have  $N(L_w) \cdot \Lambda_{S_w} = N^0(L_w) \cdot \Lambda_{S_w}$;
 \item $L$ is unramified over $v$;
 \item $\msc{N}^0_v$ is reductive.
 \end{itemize}
The last three conditions hold for almost all $v$, and by Theorem \ref{thm:split}.2 the same is true for the first condition. 
Let us now follow the proof of the previous point, for the group $G_{K_v}$ and its faithful representation $V_{K_v}$. 
The first two conditions tell us that $N^0(L_w) \cdot \Lambda_{S_w}$ is the only $N^0(L_w)$-orbit of lattices yielding the Chevalley model of $G_{L_w}$ for every place $w$ of $L$ over $v$. 
By the last two conditions and Proposition \ref{smthfin} we know that $Q_{N^0}^{L_w/K_v}(\msc{N}^0(S_w)) = 1$, hence there is only one $N^0(K_v)$-orbit of lattices that gets mapped to $N^0(L_w) \cdot \Lambda_{S_w}$. 
This is the unique $N^0(K_v)$-orbit of lattices in $V_{K_v}$ yielding the Chevalley model of $G_{L_w}$; 
in particular there is only one $N(K_v)$-orbit of such lattices. \qedhere
\end{enumerate}
\end{proof}

\section{Reductive groups over number fields} \label{s:numberfield}

In this section we prove Theorem \ref{thm:main1} over number fields. 
To go from local fields to number fields, we work with the topological ring of finite adèles $\mathbb{A}_{K,\recht{f}}$ over a number field $K$; 
let $\hat{R} \subset \mathbb{A}_{K,\recht{f}}$ be the profinite completion of the ring of integers $R$ of $K$. 
If $M$ is a free $\mathbb{A}_{K,\recht{f}}$-module of finite rank, we say that a \emph{lattice} in $M$ is a free $\hat{R}$-submodule that generates $M$ as an $\mathbb{A}_{K,\recht{f}}$-module. 
The set of lattices in $M$ is denoted $\recht{Lat}(M)$, and if $G$ is a subgroup scheme of $\mathtt{GL}(M)$, we denote $\recht{Lat}_G(M) := G(\mathbb{A}_{K,\recht{f}}) \backslash \recht{Lat}(M)$.
If $V$ is a finite dimensional $K$-vector space, then the map $\Lambda \mapsto \Lambda_{\hat{R}}$ gives a bijection $\recht{Lat}(V) \DistTo \recht{Lat}(V_{\mathbb{A}_{K,\recht{f}}})$.

\begin{lem} \label{lem:adele}
Let $K$ be a number field, let $G$ be a (not necessarily connected) reductive group over $K$, and let $V$ be a finite dimensional faithful representation of $G$. Let $\msc{G}$ be a model of $G$.
\begin{enumerate}
\item $\msc{G}(\hat{R})$ is a compact open subgroup of $G(\mathbb{A}_{K,\recht{f}})$ in the adèlic topology;
\item The map $\recht{Lat}_G(V) \rightarrow \recht{Lat}_G(V_{\mathbb{A}_{K,\recht{f}}}) $ is finite;
\item The map $\recht{Lat}_G(V_{\mathbb{A}_{K,\recht{f}}}) \rightarrow \prod_v \recht{Lat}_G(V_{K_v})$ is injective.
\end{enumerate}
\end{lem}

\begin{proof} { \ }
\begin{enumerate}
\item Let $V$ be a faithful representation of $G$ and let $\Lambda$ be a lattice in $V$ such that $\msc{G}$ is the model of $G$ associated to $\Lambda$. 
Then $\msc{G}(\hat{R}) = G(\mathbb{A}_{K,\recht{f}}) \cap \recht{End}(\Lambda_{\hat{R}})$. 
Since $\recht{End}(\Lambda_{\hat{R}})$ is open in $\recht{End}(V_{\mathbb{A}_{K,\recht{f}}})$, we see that $\msc{G}(\hat{R})$ is open in $G(\mathbb{A}_{K,\recht{f}})$. 
It is compact because it is the profinite limit of finite groups $\varprojlim \msc{G}(R/I)$, where $I$ ranges over the ideals of $R$.
\item  Let $\Lambda$ be a lattice in $V$, and let $\msc{G}$ be the model of $G$ induced by $\Lambda$. 
Then the stabiliser of $\Lambda_{\hat{R}}$ in $G(\mathbb{A}_{K,\recht{f}})$ is equal to $\msc{G}(\hat{R})$, which by the previous point is a compact open subgroup of $G(\mathbb{A}_{K,\recht{f}})$. 
Then as a $G(\mathbb{A}_{K,\recht{f}})$-set we can identify $G(\mathbb{A}_{K,\recht{f}}) \cdot \Lambda_{\hat{R}}$ with $G(\mathbb{A}_{K,\recht{f}})/\msc{G}(\hat{R})$. 
By \cite[Thm. 5.1]{borel1963} the double coset $G(K) \backslash G(\mathbb{A}_{K,\recht{f}})/\msc{G}(\hat{R})$ is finite. 
This means that $G(\mathbb{A}_{K,\recht{f}}) \cdot \Lambda_{\hat{R}}$ consists of only finitely many $G(K)$-orbits of lattices in $V_{\mathbb{A}_{K,\recht{f}}}$. 
Since the map $\recht{Lat}(V) \rightarrow \recht{Lat}(V_{\mathbb{A}_{K,\recht{f}}})$ is a $G(K)$-equivariant bijection, each of these orbits corresponds to one $G(K)$-orbit of lattices in $V$; 
hence there are only finitely many $G(K)$-orbits of lattices in $V$ with the same image as $\Lambda$ in $\recht{Lat}_G(V_{\mathbb{A}_{K,\recht{f}}})$, which proves that the given map is indeed finite.
\item Let $\Lambda, \Lambda'$ be two lattices in $V_{\mathbb{A}_{K,\recht{f}}}$ whose images in $\prod_v \recht{Lat}_G(V_{K_v})$ are the same. 
Then for every $v$ there exists a $g_v \in G(K_v)$ such that $g_v \cdot \Lambda_{R_v} = \Lambda'_{R_v}$. 
Since $\Lambda_{R_v} = \Lambda'_{R_v}$ for almost all $v$, we can take $g_v = 1$ for almost all $v$;
 hence $g \cdot \Lambda = \Lambda'$ for $g = (g_v)_v \in G(\mathbb{A}_{K,\recht{f}})$. \qedhere
\end{enumerate}
\end{proof}

\begin{proof}[Proof of Theorem \ref{thm:main1}] 
The case that $K$ is a $p$-adic field is proven in Theorem \ref{thm:local}.1, so suppose $K$ is a number field. 
Then we have the following commutative diagram:

\begin{center}
\begin{tikzpicture}[description/.style={fill=white,inner sep=2pt}]
\matrix (m) [matrix of math nodes, row sep=3em,
column sep=6em, text height=1.5ex, text depth=0.25ex]
{\recht{Lat}_N(V) & \recht{Lat}_N(V_{\mathbb{A}_{K,\recht{f}}}) & \prod_v \recht{Lat}_N(V_{K_v}) \\
\recht{Mdl}(G) & & \prod_v \recht{Mdl}(G_{K_v})\\};
\path[->,font=\scriptsize]
(m-1-1) edge node[auto] {$ f_1$}  (m-1-2)
edge node[auto] {$\mathtt{mdl}_G$} (m-2-1)
(m-1-2) edge node[auto] {$ f_2$} (m-1-3)
(m-1-3) edge node[auto] {$\prod_v \mathtt{mdl}_{G_{K_v}}$} (m-2-3)
(m-2-1) edge node[auto] {$\prod_v \recht{Spec}(R_v) \times_{\recht{Spec}(R)} -$} (m-2-3);
\end{tikzpicture}
\end{center}

Let $L$ be as in Theorem \ref{thm:local}.2, and let $R$ and $S$ be the rings of integers of $K$ and $L$, respectively. 
Let $\msc{G}$ be a model of $G$. 
Then for almost all finite places $w$ of $L$ the model $\msc{G}_{S_w}$ of $G_{L_w}$ is the Chevalley model. 
By Theorem \ref{thm:local} we know that for every finite place $v$ of $K$ there are only finitely many $N(K_v)$-orbits of lattices in $V_{K_v}$ whose associated model is $\msc{G}_{R_v}$, and for almost all $v$ there is exactly one such orbit. 
This shows that there are only finitely many elements of $\prod_v \recht{Lat}_N(V_{K_v})$ that map to $(\msc{G}_{R_v})_v$. 
Hence the map on the right of the diagram above is finite; 
since $f_1$ and $f_2$ are finite as well by Lemma \ref{lem:adele}, this proves the Theorem.
\end{proof}

\begin{rem} \label{rem:locmod}
The proof of Theorem \ref{thm:main1} also shows that for every collection of models $(\msc{G}_v)_v$ of the $G_v$, there are at most finitely many $N(K)$-orbits of lattices in $V$ that yield that collection of models.
\end{rem}

\section{Integral Mumford--Tate groups} \label{s:shimura}

Let $g$ and $n$ be integers with $g \geq 1$ and $n \geq 3$.
Let $\mathcal{A}_{g,n}$ be the moduli space of principally polarised abelian varieties of dimension $g$ with level $n$ structure. 
Let $\mathcal{X}_{g,n}$ be the universal abelian variety over $\mathcal{A}_{g,n}$, and let $\mathcal{V}_{g,n}$ be the variation of integral Hodge structures on $\mathcal{A}_{g,n}$ for which $\mathcal{V}_{g,n,y} = \recht{H}^1(\mathcal{X}^{\recht{an}}_{g,n,y},\mathbb{Z})$ for every $y \in \mathcal{A}_{g,n}(\mathbb{C})$.
If $Y \subset \mathcal{A}_{g,n}$ is a special subvariety, then we can define its \emph{generic (integral) Mumford--Tate group} $\mathtt{GMT}(Y)$ analogously to how one defines the generic rational Mumford--Tate group for a variation of rational Hodge structures, as for instance in \cite{moonen2004introduction}. 
If $y \in Y$ is Hodge generic (i.e. its special closure is $Y$) then $\mathtt{GMT}(Y)$ is isomorphic to the Mumford--Tate group of the integral Hodge structure $\mathcal{V}_{g,n,y}$.
The resulting integral group scheme is flat and of finite type over $\mathbb{Z}$, and its generic fibre is a connected reductive rational algebraic group. The aim of this section is to prove the following Theorem:

\begin{thm} \label{thm:main2}
Let $g$ and $n$ be positive integers with $n > 2$, and let $\mathscr{G}$ be a group scheme over~$\mathbb{Z}$. 
Then there are at most finitely many special subvarieties $Y$ of $\mathcal{A}_{g,n}$ such that $\mathtt{GMT}(Y) \cong \mathscr{G}$.
\end{thm}

Throughout this section, by a \emph{symplectic representation} of an algebraic group $G$ over a field $K$ we mean a morphism of algebraic groups $G \rightarrow \mathtt{GSp}(V,\psi)$ for some symplectic $K$-vector space $(V,\psi)$. 
By \cite[Thm.~2.1(b)]{knop2006} the isomorphism class of a symplectic representation is uniquely determined by its underlying representation $G \rightarrow \mathtt{GL}(V)$. 
Before we prove the main result in section \ref{ss:integralcase}, we first need to devote section \ref{ss:rationalcase} to Proposition \ref{prop:main2rat}, which is the `rational analogon' to Theorem \ref{thm:main2}.

\subsection{The rational case} \label{ss:rationalcase}

The goal of this section is to prove the following Proposition:

\begin{prop} \label{prop:main2rat}
Let $G$ be a reductive algebraic group over $\mathbb{Q}$.
then up to Hecke correspondence there are at most finitely many special subvarieties $Y$ of $\mathcal{A}_{g,n}$ such that $\mathtt{GMT}(Y)_{\mathbb{Q}} \cong G$.
\end{prop}

This is as far as we can get into proving Theorem \ref{thm:main2} without using integral information, as rational generic Mumford--Tate groups are invariant under Hecke correspondences.
We first need to set up some notation before we get to the proof.
For an algebraic group $G$ over $\mathbb{Q}$ we write $G(\mathbb{R})^+$ for the identity component of the Lie group $G(\mathbb{R})$.
We write $\mathbb{S}$ for the Deligne torus $\recht{Res}_{\mathbb{C}/\mathbb{R}}\mathbb{G}_{\recht{m}}$.
Let $\mathcal{H}_g$ be the $g$-dimensional Siegel space; then $(\mathtt{GSp}_{2g},\mathcal{H}_g)$ is a Shimura datum. 
We fix a connected component $\mathcal{H}_g^+ \subset \mathcal{H}_g$.

\begin{defn}
A \emph{reductive connected Shimura datum} is a pair $(G,X^+)$ consisting of a connected reductive group $G$ over $\mathbb{Q}$ and a $G(\mathbb{R})^+$-orbit $X^+$ of morphisms $\mathbb{S} \rightarrow G_{\mathbb{R}}$ such that the pair $(G,G(\mathbb{R})\cdot X^+)$ is a Shimura datum.
\end{defn}

A reductive connected Shimura datum differs from a connected Shimura datum in the sense of \cite{milne2004introduction} in that we do not require $G$ to be semisimple, and we look at morphisms $\mathbb{S} \rightarrow G_{\mathbb{R}}$ rather than morphisms $\mathbb{S}^1 \rightarrow G^{\recht{ad}}_{\mathbb{R}}$.

\begin{defn}
A \emph{connected Shimura triple of rank $2g$} is a triple $(G,X^+,\varrho)$ consisting of:
\begin{itemize}
\item a reductive algebraic group $G$ over $\mathbb{Q}$;
\item a $G(\mathbb{R})^+$-orbit $X^+ \subset \recht{Hom}(\mathbb{S},G_{\mathbb{R}})$ such that the pair $(G,X^+)$ is a connected reductive Shimura datum;
\item an injective morphism of algebraic groups $\varrho\colon G \hookrightarrow \mathtt{GSp}_{2g,\mathbb{Q}}$ such that $\varrho_{\mathbb{R}} \circ X^+ \subset \mathcal{H}^+_g$, and such that $G$ is the generic Mumford--Tate group of $X^+$ under this embedding.
\end{itemize}
\end{defn}

For a reductive group $G$, we define 
\begin{equation}
\Omega(G,g) := \Big\{(X^+,\varrho): (G,X^+,\varrho) \textrm{ is a connected Shimura triple of rank $2g$}\Big\}. 
\end{equation}
There are commuting actions from $\recht{Aut}(G)$ on the left and $\recht{GSp}_{2g}(\mathbb{Q})$ on the right on $\Omega(G,g)$, defined by
\begin{equation} \label{eq:omegact}
\sigma \cdot (X^+,\varrho) \cdot a := (\sigma_{\mathbb{R}} \circ X^+,\recht{inn}(a)^{-1} \circ \varrho \circ \sigma^{-1}).
\end{equation}
The reason to study these Shimura triples is that every special subvariety comes from a connected Shimura triple in the following sense:
The Shimura variety $\mathcal{A}_{g,n}$ is a finite disjoint union of complex analytical spaces of the form $\Gamma \backslash \mathcal{H}^+_g$, where $\Gamma \subset \recht{GSp}_{2g}(\mathbb{Z})$ is a congruence subgroup, and $\mathcal{H}^+_g$ is a fixed connected component of $\mathcal{H}_g$. 
For such a $\Gamma$, and a connected Shimura triple $(G,X^+,\varrho)$ of rank $2g$, denote by $Y_{\Gamma}(G,X^+,\varrho)$ the image of $\varrho(X^+) \subset \mathcal{H}^+_g$ in $\Gamma \backslash \mathcal{H}_g^+$.
This is a special subvariety of $\Gamma \backslash \mathcal{H}_g^+$, and all special subvarieties arise in this way.
Furthermore, $\mathtt{GMT}(Y_{\Gamma}(G,X^+,\varrho))_{\mathbb{Q}}$ is isomorphic to $G$. If $Y = Y_{\Gamma}(G,X^+,\varrho)$ and $Y'$ are two special subvarieties of $\Gamma  \backslash \mathcal{H}^+_g$ that differ by a Hecke correspondence, then there exists an $a \in \recht{GSp}_{2g}(\mathbb{Q})$ such that $Y' = Y_{\Gamma}(G,X^+,\recht{inn}(a)^{-1} \circ \varrho)$.
Proposition \ref{prop:main2rat} is therefore equivalent to the following result:

\begin{prop}
Let $G$ be a connected reductive group over $\mathbb{Q}$. 
Then the cardinality of the quotient set $\recht{Aut}(G)\backslash\Omega(G,g)/\recht{GSp}_{2g}(\mathbb{Q})$ is finite.
\end{prop}

The rest of this subsection is dedicated to the proof of this Proposition.
We first prove some auxiliary results.

\begin{lem} \label{lem:detfin}
Let $d$ be a positive integer.
Let $\Pi$ be a finite subgroup of $\recht{GL}_d(\mathbb{Z})$, and let $\eta_0 \in \mathbb{Z}^d$ be such that $\Pi \cdot \eta_0$ generates the rational vector space $\mathbb{Q}^d$.
Then up to the action of $\recht{Aut}_{\Pi}(\mathbb{Z}^d)$ there are only finitely many elements $\eta \in \mathbb{Z}^d$ such that for all $\pi_1,\cdots,\pi_d \in \Pi$ we have
\begin{equation} \label{eq:piaction}
\recht{det}(\pi_1 \cdot \eta_0,\cdots,\pi_d \cdot \eta_0) = \recht{det}(\pi_1 \cdot \eta,\cdots,\pi_d \cdot \eta).
\end{equation}
\end{lem}

\begin{proof}
Fix $\pi_1,\cdots,\pi_d \in \Pi$ such that the $\pi_i \cdot \eta_0$ are $\mathbb{Q}$-linearly independent,
and define the integer $C := \recht{det}(\pi_1 \cdot \eta_0,\cdots,\pi_d \cdot \eta_0)$.
Let $\eta \in \mathbb{Z}^d$ be such that it satisfies (\ref{eq:piaction}).
Then $ \recht{det}(\pi_1 \cdot \eta,\cdots,\pi_d \cdot \eta) = C \neq 0$, so the $\pi_i \cdot \eta$ are $\mathbb{Q}$-linearly independent as well.
If $\pi$ is any element of $\Pi$, then there exist $c_i,c_i' \in \mathbb{Q}$ such that $\pi \cdot \eta_0 = \sum_i c_i(\pi_i \cdot \eta_0)$ and $\pi \cdot \eta = \sum_i c'_i(\pi_i \cdot \eta)$. 
Then we may calculate
\begin{align*}
c_i \cdot C &= \recht{det}(\pi_1 \cdot \eta_0,\cdots, \pi_{i-1}\cdot \eta_0, \pi \cdot \eta_0,\pi_{i+1} \cdot \eta_0,\cdots,\pi_d \cdot \eta_0) \\
&= \recht{det}(\pi_1 \cdot \eta,\cdots, \pi_{i-1}\cdot \eta, \pi \cdot \eta,\pi_{i+1} \cdot \eta,\cdots,\pi_d \cdot \eta) \\
&= c'_i \cdot C.
\end{align*}
Hence $c_i = c'_i$, and we find that for every collection of schalars $(x_\pi)_{\pi \in \Pi}$ in $\recht{Map}(\Pi,\mathbb{Q})$ we have
\begin{equation*}
\sum_{\pi \in \Pi} x_{\pi} \cdot (\pi \cdot \eta_0) = 0 \Leftrightarrow \sum_{\pi \in \Pi} x_{\pi} \cdot (\pi \cdot \eta) = 0.
\end{equation*}
It follows that there exists a unique $\Pi$-equivariant linear isomorphism $f_{\eta}\colon \mathbb{Q}^d \rightarrow \mathbb{Q}^d$ satisfying $f_{\eta}(\eta_0) = \eta$.
Let $\Lambda_{\eta}$ be the lattice in $\mathbb{Q}^d$ generated by $\Pi \cdot \eta$;
then $f_{\eta}(\Lambda_{\eta_0}) = \Lambda_\eta$. Now let $\eta' \in \mathbb{Z}^d$ be another element satisfying (\ref{eq:piaction});
then $f_{\eta'} \circ f_{\eta}^{-1}$ is the unique $\Pi$-equivariant automorphism of $\mathbb{Q}^d$ that sends $\eta$ to $\eta'$.
This automorphism induces a $\Pi$-equivariant automorphism of $\mathbb{Z}^d$ if and only if $f^{-1}_{\eta}(\mathbb{Z}^d) = f^{-1}_{\eta'}(\mathbb{Z}^d)$ in $\mathbb{Q}^d$.
Hence $\recht{Aut}_{\Pi}(\mathbb{Z}^d)$-orbits of suitable $\eta$ correspond bijectively to lattices of the form $f^{-1}_{\eta}(\mathbb{Z}^d)$ in $\mathbb{Q}^d$.
Let $C$ be as above;
then $\Lambda_{\eta} \subset \mathbb{Z}^d \subset C^{-1}\Lambda_{\eta}$, hence $\Lambda_{\eta_0} \subset f^{-1}_{\eta}(\mathbb{Z}^d) \subset C^{-1} \Lambda_{\eta_0}$.
Since there are only finitely many options for lattices between $\Lambda_{\eta_0}$ and $C^{-1}\Lambda_{\eta_0}$, we conclude that there are only finitely many options for the $\recht{Aut}_{\Pi}(\mathbb{Z}^d)$-orbit of $\eta$.
\end{proof}

\begin{lem} \label{lem:torfin}
Let $T$ be a torus over $\mathbb{Q}$, and let $\nu\colon \mathbb{G}_{\recht{m},\bar{\mathbb{Q}}} \rightarrow \mathtt{GSp}_{2g,\bar{\mathbb{Q}}}$ be a symplectic representation.
Let $S$ be the collection of pairs $(\eta,\tau)$, where $\eta\colon \mathbb{G}_{\recht{m},\bar{\mathbb{Q}}} \rightarrow T_{\bar{\mathbb{Q}}}$ is a cocharacter whose image is Zariski dense in the $\mathbb{Q}$-group $T$, and $\tau\colon T \hookrightarrow \mathtt{GSp}_{2g,\mathbb{Q}}$ is a faithful symplectic representation, such that $\nu \cong \tau_{\bar{\mathbb{Q}}} \circ \eta$ as symplectic representations of $\mathbb{G}_{\recht{m},\bar{\mathbb{Q}}}$. Define an action of $\recht{Aut}(T)$ on $S$ by $\sigma \cdot (\eta,\tau) = (\sigma_{\bar{\mathbb{Q}}} \circ \eta, \tau \circ \sigma^{-1})$. Then $\recht{Aut}(T) \backslash S$ is finite.
\end{lem}

\begin{proof}
Define $X = \recht{X}_*(T)$ as a free abelian group with a $\recht{Gal}(\bar{\mathbb{Q}}/\mathbb{Q})$-action, and identify $\recht{X}^*(T)$ with $X^{\vee}$ via the natural perfect pairing.
Let $\Pi$ be the image of $\recht{Gal}(\bar{\mathbb{Q}}/\mathbb{Q})$ in $\recht{GL}(X)$; this is a finite group.
Now let $(\eta,\tau) \in S$; then $\tau$ is given by a multiset $W \subset X^{\vee}$.
The fact that $\tau$ is faithful and defined over $\mathbb{Q}$ implies that $W$ generates $X^{\vee}$ as an abelian group and that $W$ is invariant under the action of $\Pi$.
Since the image of $\eta$ is Zariski dense in $T$, we find that $X_{\mathbb{Q}}$ is generated by $\Pi \cdot \eta$.
Now let $d$ be the rank of $X$, and let $\pi_1,\cdots,\pi_d \in \Pi$. Consider the homomorphism of abelian groups
\begin{align*}
\varphi_{\eta,(\pi_i)_i}\colon X^{\vee} &\rightarrow \mathbb{Z}^d \\
\lambda & \mapsto (\lambda(\pi_i \cdot \eta))_{i \leq d}.
\end{align*}
The isomorphism class of the representation $\nu$ is given by a multiset $\Sigma \subset \recht{X}^*(\mathbb{G}_{\recht{m}}) = \mathbb{Z}$. 
Since we require $\nu \cong \tau_{\mathbb{Q}} \circ \eta$, we find that $W \circ \eta = \Sigma$ as multisets in $\mathbb{Z}$. 
Furthermore, $W$ is Galois-invariant, so $W \circ (\pi \cdot \eta) = \Sigma$ for all $\pi \in \Pi$. Define
\begin{equation*}
m := \recht{max}\Big\{|\sigma|:\sigma \in \Sigma \subset \mathbb{Z}\Big\}.
\end{equation*}
Then the multiset $\varphi_{\eta,(\pi_i)_i}(W)$ in $\mathbb{Z}^d$ is contained in $\{-m,\cdots,m\}^d$. 
Choose an identification $X \cong \mathbb{Z}^d$, so that we may consider $\varphi_{\eta,(\pi_i)_i}$ as an element of $\recht{Mat}_d(\mathbb{Z})$.
In this notation, the $i$-th column of $\varphi_{\eta,(\pi_i)_i}$ equals the column vector $\pi_i \cdot \eta$.
Furthermore, $|\recht{det}(\varphi_{\eta,(\pi_i)_i})|$ is equal to the volume of the image of a fundamental parallellogram of $\mathbb{Z}^d$. 
Since $X^{\vee}$ is generated by $W$, this volume cannot exceed $m^d$, hence $|\recht{det}(\varphi_{\eta,(\pi_i)_i})| \leq m^d$ for all choices of the $\pi_i$.
Hence, if we let $(\tau,\eta)$ range over $S$, there are only finitely many possibilities for the map
\begin{align*}
t_\eta\colon \Pi^d &\rightarrow \{-m^d,\cdots,m^d\} \\
(\pi_1,\cdots,\pi_d) &\mapsto \recht{det}(\varphi_{\eta,\pi_i}).
\end{align*}
By Lemma \ref{lem:detfin} there are, up to the action of $\recht{Aut}(T) \cong \recht{Aut}_{\Pi}(X)$, only finitely many $\eta \in X$ yielding the same $t_{\eta}$.
Since the set of possible $t_{\eta}$ is also finite, we see that there are only finitely many options for $\eta$ (up to the $\recht{Aut}(T)$-action).
 Now fix such an $\eta$. 
 For every $w \in W$ we need to have $w(\pi \cdot \eta) \in \Sigma$, for all $\pi \in \Pi$. 
 Since $\Pi \cdot \eta$ generates $X_{\mathbb{Q}}$, there are only finitely many options for $w$, hence for the multiset $W$, since the cardinality of $W$ has to be equal to $2g$.
 We conclude that up to the action of $\recht{Aut}(T)$ there are only finitely many possibilities for $(\eta,\tau)$.
\end{proof}

\begin{lem} \label{lem:autgfin}
Let $G$ be a connected reductive group over $\mathbb{Q}$, and let $Z$ be the identity component of its centre.
Let $\varphi$ be the natural map $\recht{Aut}(G) \rightarrow \recht{Aut}(Z)$.
Then $\varphi(\recht{Aut}(G))$ has finite index in $\recht{Aut}(Z)$.
\end{lem}

\begin{proof}
Let $H$ be the finite group $Z \cap G^{\recht{der}}$, and let $n := \#H$. 
If $\sigma$ is an automorphism of $Z$ that is the identity on $H$, then we can extend $\sigma$ to an automorphism $\tilde{\sigma}$ of $G$ by having $\tilde{\sigma}$ be the identity on $G^{\recht{der}}$.
Because of this, it suffices to show that the subgroup
\[
\Big\{\sigma \in \recht{Aut}(Z) : \sigma|_H = \recht{id}_H\Big\} \subset \recht{Aut}(Z)
\]
has finite index. Let $X := \recht{X}_*(Z)$. Let $\sigma \in \recht{Aut}(T)$, and consider $\sigma$ as an element of $\recht{GL}(X)$.
If $\sigma$ maps to the identity in $\recht{Aut}_{\mathbb{Z}/n\mathbb{Z}}(X/nX)$, then $\sigma$ is the identity on $Z[n]$, and in particular on $H$.
Since $\recht{Aut}_{\mathbb{Z}/n\mathbb{Z}}(X/nX)$ is finite, the Lemma follows.
\end{proof}

\begin{lem} \label{lem:repdercent}
Let $G$ be a connected reductive group over $\mathbb{Q}$, and let $Z$ be the identity component of its centre.
Let $\varrho_{\recht{cent}}$ and $\varrho_{\recht{der}}$ be $2g$-dimensional symplectic representations of $Z$ and $G^{\recht{der}}$.
Then there are at most finitely many symplectic representations $\varrho$ of $G$ such that $\varrho|_{Z} \cong \varrho_{\recht{cent}}$ and $\varrho|_{G^{\recht{der}}} \cong \varrho_{\recht{der}}$ as symplectic representations of $Z$ and $G^\recht{der}$, respectively.
\end{lem}

\begin{proof}
Let $T'$ be a maximal torus of $G^{\recht{der}}$.
Then the isomorphism classes of $\varrho_{\recht{cent}}$ and $\varrho_{\recht{der}}$ are given by multisets $\Sigma_{\recht{cent}} \subset \recht{X}^*(Z)$ and $\Sigma_{\recht{der}} \subset \recht{X}^*(T')$, both of cardinality $2g$.
Let $T := Z \cdot T' \subset G$, this is a maximal torus.
A symplectic representation $\varrho$ of $G$ satisfying these conditions corresponds to a multiset $\Sigma \subset \recht{X}^*(T)$ of cardinality $2g$, such that $\Sigma$ maps to $\Sigma_{\recht{cent}}$ in $\recht{X}^*(Z)$ and to $\Sigma_{\recht{der}}$ in $\recht{X}^*(T')$. 
Because $\recht{X}^*(T)_{\mathbb{Q}} = \recht{X}^*(Z)_{\mathbb{Q}} \oplus \recht{X}^*(T')_{\mathbb{Q}}$, there are only finitely many options for $\Sigma$, as we can obtain all of them by creating pairings of elements of $\Sigma_{\recht{cent}}$ with elements of $\Sigma_{\recht{der}}$.
\end{proof}

For the proof of Proposition \ref{prop:main2rat} we furthermore need some more notation. We fix a $G$ and $g$, and write $\bar{\Omega} := \Omega(G,g)/\recht{GSp}_{2g}(\mathbb{Q})$; we need to show that $\recht{Aut}(G)\backslash\bar{\Omega}$ is finite. Consider the natural projection $Z \times G^{\recht{der}} \twoheadrightarrow G$. This is an isogeny, and we let $n$ be its degree.
Let $(X^+,\varrho)$ be an element of $\Omega$.
If $x$ is an element of $X^+$, then  the composite map $\mathbb{S} \stackrel{n}{\rightarrow} \mathbb{S} \stackrel{x}{\rightarrow} G_{\mathbb{R}}$ factors uniquely through $Z_{\mathbb{R}} \times G^{\recht{der}}_{\mathbb{R}}$.
Let $x_{\recht{cent}}$ and $x_{\recht{der}}$ be the associated maps from $\mathbb{S}$ to $Z_{\mathbb{R}}$ and $G^{\recht{der}}_{\mathbb{R}}$, respectively. 
Define $X^+_{\recht{der}} = \{x_{\recht{der}}:x \in X^+\}$, and let $X^+_{\recht{ad}}$ be the image of $X^+$ under $\recht{Ad}:G \twoheadrightarrow G^{\recht{ad}}$. We define the sets $\bar{\Omega}_{\recht{der}}$ and $\bar{\Omega}_{\recht{cent}}$:
\begin{itemize}
\item Let $\bar{\Omega}_{\recht{der}}$ be the set of all pairs $(Y^+,\sigma)$, where $Y^+$ is a $G^{\recht{der}}(\mathbb{R})^+$-orbit in $\recht{Hom}(\mathbb{S},G^{\recht{der}}_{\mathbb{R}})$ such that $\recht{Ad}_{\mathbb{R}} \circ Y^+ = X^+_{\recht{ad}} \circ n$ for some connected Shimura datum $(G^{\recht{ad}},X^+_{\recht{ad}})$, and $\sigma$ is an isomorphism class of symplectic representations of $G^{\recht{der}}$ of dimension $2g$.
\item Let $\mu\colon \mathbb{G}_{\recht{m},\mathbb{C}}\rightarrow \mathbb{S}_{\mathbb{C}}$ be the Hodge cocharacter. Let $\Xi_{\recht{cent}}$ be the set of all pairs $(\eta,\tau)$, where $\eta \in \recht{X}^*(Z^0)$, and $\tau$ is an isomorphism class of symplectic representations of $Z$ of dimension $2g$. Let $\bar{\Omega}_{\recht{cent}} \subset \Xi_{\recht{cent}}$ be the subset of all pairs of the form $(x_{\recht{cent}} \circ \mu,\varrho|_Z)\big)$, where $(X^+,\varrho) \in \bar{\Omega}$, and $x \in X^+$.
\end{itemize}

Note that there is a natural map
\begin{align*}
\varphi\colon \bar{\Omega} &\rightarrow \bar{\Omega}_{\recht{der}} \times \bar{\Omega}_{\recht{cent}} \\
(X^+,\varrho) &\mapsto \big((X^+_{\recht{der}},\varrho|_{G^{\recht{der}}}),(x_{\recht{cent}} \circ \mu,\varrho|_Z)\big),
\end{align*}
where $x$ is any element of $X^+$; since $x_{\recht{cent}}$ is invariant under the action of $G^{\recht{der}}(\mathbb{R})^+$ on $X^+$, the choice does not matter. We can define left actions $\recht{Aut}(G)$ on $\bar{\Omega}_{\recht{der}}$ and $\bar{\Omega}_{\recht{cent}}$ analogously to (\ref{eq:omegact}), and this makes $\varphi$ an $\recht{Aut}(G)$-equivariant map.

\begin{lem} \label{lem:phifin}
The map $\varphi$ is finite.
\end{lem}

\begin{proof}
This follows directly from Lemma \ref{lem:repdercent}
\end{proof}

\begin{lem} \label{lem:derfin}
The set $\bar{\Omega}_{\recht{der}}$ is finite.
\end{lem}

\begin{proof}
Let $X^+_{\recht{ad}}$ be the image of $X^+$ under $\recht{Ad}\colon G \twoheadrightarrow G^{\recht{ad}}$. Then $(G^{\recht{ad}},X^+_{\recht{ad}})$ is a connected Shimura datum in the sense of \cite{milne2004introduction}. It follows from \cite{deligne} that, for a given $G$, there are only finitely many possibilities for $X^+_{\recht{ad}}$. Since $\recht{Ad}_{\mathbb{R}} \circ X^+_{\recht{der}} = X^+_{\recht{ad}} \circ n$ as subsets of $\recht{Hom}(\mathbb{S},G^{\recht{ad}}_{\mathbb{R}})$ and $\recht{Ad}\colon G^{\recht{der}} \twoheadrightarrow G^{\recht{ad}}$ is an isogeny, there are only finitely many possibilities for $Y^+$. Furthermore, a semisimple group has only finitely many symplectic representations of a given dimension, so there are also only finitely many possibilities for $\bar{\sigma}$.
\end{proof}

\begin{lem} \label{lem:isomfin}
There only finitely many possibilities for the isomorphism class $\varrho_{\mathbb{R}} \circ x_{\recht{cent}}$ of $\mathbb{S}$ as we let $x$ range over all possible elements over all possible $X^+$, for $(X^+,\varrho) \in \bar{\Omega}$.
\end{lem}

\begin{proof}
Let $(Y ^+,\sigma)$ be an element of $\bar{\Omega}_{\recht{der}}$. 
The isomorphism class of the symplectic representation $\sigma_{\mathbb{R}} \circ y$ of $\mathbb{S}$ does not depend on the choice of $y \in Y^+$.
Since $\bar{\Omega}_{\recht{der}}$ is finite, in this way we obtain only finitely many symplectic representations of $\mathbb{S}$.
If we apply this to an $(X^+,\varrho) \in \bar{\Omega}$ and a $x \in X^+$, then $\varrho_{\mathbb{R}} \circ x_{\recht{der}}$ is one of these representations.
It follows that there are only finitely many options for the isomorphism class of $\varrho_{\mathbb{R}} \circ x_{\recht{der}}$ as we let $(X^+,\varrho)$  range over all possible options (it again does not depend on the choice of $x \in X^+$).
On the other hand, there is only one possibility for the isomorphism class $\varrho_{\mathbb{R}} \circ x$; it is the representation of $\mathbb{S}$ corresponding to a polarised Hodge structure with weights $\{(1,0),(0,1)\}$ and dimension $2g$.
We know that the representations $\varrho_{\mathbb{R}} \circ x$, $\varrho_{\mathbb{R}} \circ x_{\recht{der}}$ and $\varrho_{\mathbb{R}} \circ x_{\recht{cent}}$ are given by their multisets of weights $\Sigma, \Sigma_{\recht{der}}, \Sigma_{\recht{cent}} \subset \recht{X}_*(\mathbb{S})$. 
The fact that $x \circ n$ factors as $x_{\recht{cent}} \times x_{\recht{der}}$ implies the following property of these multisets:
There exist orderings $\Sigma_{\recht{der}} = \{\sigma_{1,\recht{der}},\cdots,\sigma_{2g,\recht{der}}\}$ and $\Sigma_{\recht{cent}} = \{\sigma_{1,\recht{cent}},\cdots,\sigma_{2g,\recht{cent}}\}$ such that $n\Sigma = \{\sigma_{i,\recht{der}}+\sigma_{i,\recht{cent}}: i \leq 2g \}$.
By the above, there are only finitely many possibilities for both $\Sigma$ and $\Sigma_{\recht{der}}$, hence there are only finitely many possibilities for $\Sigma_{\recht{cent}}$.
\end{proof}

\begin{lem} \label{lem:bothfin}
The set $\recht{Aut}(G)\backslash(\bar{\Omega}_{\recht{der}} \times \bar{\Omega}_{\recht{cent}})$ is finite.
\end{lem}

\begin{proof}
Let $(X^+,\varrho) \in \bar{\Omega}$, and let $(x_{\recht{cent}} \circ \mu, \varrho_{Z})$ be its associated element of $\bar{\Omega}_{\recht{cent}}$. 
By Lemma \ref{lem:isomfin} we know that there are only finitely many options for the symplectic representation $\nu:= \varrho_{\mathbb{C}} \circ x_{\recht{cent},\mathbb{C}} \circ \mu \colon\mathbb{G}_{\recht{m},\mathbb{C}} \rightarrow \mathtt{GSp}_{2g,\mathbb{C}}$.
Let $\bar{\Omega}_{\recht{cent},\nu} \subset \bar{\Omega}_{\recht{cent}}$ be the subset of all $(x_{\recht{cent},\mathbb{C}} \circ \mu,\varrho|_Z)$ that yield this $\nu$. Each of these is closed under the action of $\recht{Aut}(Z)$.
Since each $\nu$ is defined over $\mathbb{Q}$, Lemma \ref{lem:torfin} tells us that each $\recht{Aut}(Z) \backslash \bar{\Omega}_{\recht{cent},\nu}$ is finite.
Since the image of $\recht{Aut}(G)$ has finite index in $\recht{Aut}(Z)$ by Lemma \ref{lem:autgfin}, we know that $\recht{Aut}(G) \backslash \bar{\Omega}_{\recht{cent}}$ is finite. 
Furthermore, we know from Lemma \ref{lem:derfin} that $\bar{\Omega}_{\recht{der}}$ is finite, so $\recht{Aut}(G)\backslash(\bar{\Omega}_{\recht{cent}} \times \bar{\Omega}_{\recht{der}})$ is finite.
\end{proof}

\begin{proof}[Proof of Proposition \ref{prop:main2rat}]
Since $\varphi$ is finite by Lemma \ref{lem:phifin}, in particular the induced map $\recht{Aut}(G)\backslash\bar{\Omega} \rightarrow \recht{Aut}(G)\backslash(\bar{\Omega}_{\recht{der}} \times \bar{\Omega}_{\recht{cent}})$. Since the codomain of this map is finite by Lemma \ref{lem:bothfin}, we find that its domain is finite as well.
\end{proof}

\subsection{The integral case} \label{ss:integralcase}

In this subsection we prove Theorem \ref{thm:main2}.
Recall that as a complex analytic space we can view $\mathcal{A}_{g,n}$ as a disjoint union of spaces of the form $\Gamma\backslash\mathcal{H}_g^+$, where $\Gamma$ is a congruence subgroup of $\recht{GSp}_{2g}(\mathbb{Z})$.
As before, for a connected Shimura triple $(G,X^+,\varrho)$, let $Y_{\Gamma}(G,X^+,\varrho)$ be the image of $\varrho(X^+)$ in $\Gamma\backslash \mathcal{H}^+_g$.
Let $(G,X^+,\varrho)$ and $(G',\varrho',X'^+,\varrho')$ be two special triples, then $Y_{\Gamma}(G,X^+,\varrho) = Y_{\Gamma}(G',X'^+,\varrho')$ if and only if $G \cong G'$, and if under such an identification we have $(X^+,\varrho) = (X'^+,\varrho')$ in $\recht{Aut}(G)\backslash\Omega(G,g)/\Gamma$. We get a natural map
\begin{equation}
\mathtt{GMT}\colon \recht{Aut}(G)\backslash \Omega(G,g)/\Gamma \rightarrow \recht{Mdl}(G)
\end{equation}
by sending $(X^+,\varrho)$ to the generic (integral) Mumford--Tate group of $Y_{\Gamma}(G,X^+,\varrho)$; this is the same as the generic Mumford--Tate group of $X^+$.
We may also describe the map in the terminology of section \ref{s:background} as follows.
Let $(X^+,\varrho) \in \Omega(G,g)$, and consider the standard representation $V := \mathbb{Q}^{2g}$ of $\mathtt{GSp}_{2g,\mathbb{Q}}$ with its lattice $\Lambda := \mathbb{Z}^{2g}$; then $\mathtt{GMT}(X^+,\varrho) = \mathtt{mdl}_{\varrho(G)}(\Lambda)$.
We can also understand Hecke correspondences in this way:
recall that special subvarieties equivalent to $Y_{\Gamma}(G,X^+,\varrho)$ under Hecke correspondence are of the form $Y_{\Gamma}(G,X^+,\recht{inn}(a)^{-1}\circ \varrho)$ for some $a \in \recht{GSp}_{2g}(\mathbb{Q})$. For such a connected Shimura triple we get
\begin{equation} \label{eq:gmtmdl}
\mathtt{GMT}(X^+,\recht{inn}(a)^{-1} \circ \varrho) = \mathtt{mdl}_{a^{-1}\varrho(G)a}(\Lambda) = \mathtt{mdl}_{\varrho(G)}(a\Lambda).
\end{equation}
Furthermore, the map $\mathtt{GMT}$ allows us to consider Theorem \ref{thm:main2} as a consequence of the following result.

\begin{thm} \label{thm:main2mod}
Let $G$ be a connected reductive group over $\mathbb{Q}$, and let $\Gamma \subset \recht{GSp}_{2g}(\mathbb{Z})$ be a congruence subgroup.
Then the map $\mathtt{GMT}\colon \recht{Aut}(G)\backslash \Omega(G,g)/\Gamma \rightarrow \recht{Mdl}(G)$ is finite.
\end{thm}

\begin{proof}[Proof of Theorem \ref{thm:main2} from Theorem \ref{thm:main2mod}] The Shimura variety $\mathcal{A}_{g,n}$ is a finite disjoint union of connected Shimura varieties $\Gamma\backslash\mathcal{H}_g^+$. 
We need to show that for every $\Gamma$ and for every group scheme $\mathscr{G}$ over $\mathbb{Z}$ there are only finitely many special subvarieties of $\Gamma\backslash\mathcal{H}_g^+$ whose generic Mumford--Tate group is isomorphic to $\mathscr{G}$.
Let $G$ be the generic fibre of $\mathscr{G}$; then every such special subvariety is of the form $Y_{\Gamma}(G,X^+,\varrho)$ for some $(X^+,\varrho) \in \Omega(G,g)$. The Theorem now follows from Theorem \ref{thm:main2mod}.
\end{proof}

Let $\Gamma$ be a congruence subgroup of $\recht{GSp}_{2g}(\mathbb{Z})$.
Write $M(\Gamma) := \Gamma\backslash\mathcal{H}_g^+$; this is a real analytic space.
If $\Gamma$ is small enough, then $M(\Gamma)$ is a connected Shimura variety.
Let $\hat{\mathcal{H}}^+_g$ be the subspace $\recht{GL}_{2g}(\mathbb{R})^+ \cdot \mathcal{H}^+_g$ of $\recht{Hom}(\mathbb{S},\mathtt{GL}_{2g,\mathbb{R}})$, let $\hat{\Gamma}$ be a congruence subgroup of $\recht{GL}_{2g}(\mathbb{Z})$, and define $\hat{M}(\hat{\Gamma}) := \hat{\Gamma}\backslash\hat{\mathcal{H}}^+_g$. 
This is a real analytic space, but for $g > 1$ it will not have the structure of a connected Shimura variety.

\begin{lem} \label{lem:anfin}
Let $\Gamma \subset \recht{GSp}_{2g}(\mathbb{Z})$ be a congruence subgroup, and let $\hat{\Gamma} \subset \recht{GL}_{2g}(\mathbb{Z})$ be a congruence subgroup containing $\Gamma$. Then the map of real analytic spaces $M(\Gamma) \rightarrow \hat{M}(\hat{\Gamma})$ is finite.
\end{lem}

\begin{proof}
It suffices to prove this for $\Gamma = \recht{GSp}_{2g}(\mathbb{Z})$ and $\hat{\Gamma} = \recht{GL}_{2g}(\mathbb{Z})$. 
For these choices of congruence subgroups we have (see \cite{edixhovenyafaev2003}):
\begin{align*}
M(\Gamma) &\cong \left\{\textrm{princ.pol. Hodge structures of type $\{(0,1),(1,0)\}$ on $\mathbb{Z}^{2g}$}\right\}/{\cong},\\
\hat{M}(\hat{\Gamma}) &\cong \left\{\textrm{Hodge structures of type $\{(0,1),(1,0)\}$ on $\mathbb{Z}^{2g}$}\right\}/{\cong}.
\end{align*}
In this terminology, the natural map $M(\Gamma) \rightarrow \hat{M}(\hat{\Gamma})$ is just forgetting the polarisation.
By \cite[Thm.~18.1]{milne1986}, a polarisable $\mathbb{Z}$-Hodge structure of type $\{(0,1),(1,0)\}$ has only finitely many principal polarisations (up to automorphisms of polarised Hodge structures), which proves the Lemma.
\end{proof}

\begin{proof}[Proof of Theorem \ref{thm:main2mod}]
By Proposition \ref{prop:main2rat} it suffices to show that for every $\recht{GSp}_{2g}(\mathbb{Q})$-orbit $B$ in $\recht{Aut}(G) \backslash \Omega$ the map $\mathtt{GMT}\colon B/\Gamma \rightarrow \recht{Mdl}(G)$ is finite.
Let $(X^+,\varrho)$ be an element of such a $B$, and let $N$ be the scheme-theoretic normaliser of $\varrho(G)$ in $\mathtt{GSp}_{2g,\mathbb{Q}}$.
Then as a right $\recht{GSp}_{2g}(\mathbb{Q})$-set we can identify $B$ with $N(\mathbb{Q})\backslash\recht{GSp}_{2g}(\mathbb{Q})$, and under this identification we find
\begin{align}
B/\Gamma &\DistTo N(\mathbb{Q})\backslash\recht{GSp}_{2g}(\mathbb{Q})/\Gamma \label{eq:bmodgamma}\\
(X^+,\varrho) \cdot a \cdot \Gamma &\mapsto N(\mathbb{Q})a\Gamma. \nonumber
\end{align}
Let $V$ and $\Lambda$ be as in the beginning of this section,  and let $\hat{N}$ be the scheme-theoretic normaliser of $\varrho(G)$ in $\mathtt{GL}_{2g,\mathbb{Q}}$. By considering every lattice as being of the form $g \cdot \Lambda$ for some $g \in \recht{GL}_{2g}(\mathbb{Z})$, we get an identification
\begin{equation*}
\recht{Lat}_{\hat{N}}(V) \cong \hat{N}(\mathbb{Q})\backslash\recht{GL}_{2g}(\mathbb{Q})/\recht{GL}_{2g}(\mathbb{Z}).
\end{equation*}
From (\ref{eq:gmtmdl}) we see that the map $\mathtt{GMT}\colon B/\Gamma \rightarrow \recht{Mdl}(G)$ sends a double coset $N(\mathbb{Q})a\Gamma$, considered as an element of $B/\Gamma$ via (\ref{eq:bmodgamma}), to $\mathtt{mdl}_G(a\Lambda)$. As such we can decompose $\mathtt{GMT}$ into
\begin{align}
B/\Gamma &\DistTo  N(\mathbb{Q})\backslash\recht{GSp}_{2g}(\mathbb{Q})/\Gamma \nonumber \\
&\twoheadrightarrow N(\mathbb{Q})\backslash\recht{GSp}_{2g}(\mathbb{Q})/\recht{GSp}_{2g}(\mathbb{Z}) \label{eq:chain1}\\
&\rightarrow \hat{N}(\mathbb{Q})\backslash\recht{GL}_{2g}(\mathbb{Q})/\recht{GL}_{2g}(\mathbb{Z}) \label{eq:chain2}\\
&\DistTo \recht{Lat}_{\hat{N}}(V) \nonumber \\
&\stackrel{\mathtt{mdl}_G}{\rightarrow} \recht{Mdl}(G). \label{eq:chain3}
\end{align}
The map in (\ref{eq:chain1}) is finite because $\Gamma$ is of finite index in $\recht{GSp}_{2g}(\mathbb{Z})$, and the map in (\ref{eq:chain3}) is finite by Theorem \ref{thm:main1}. Hence it suffices to show that the map in (\ref{eq:chain2}) is finite; denote this map by $f$. Let $\mathcal{Z}$ be the set of connected real analytic subspaces of $M(\recht{GSp}_{2g}(\mathbb{Z}))$, and let $\hat{Z}$ be the set of connected real analytic subspaces of $\hat{M}(\recht{GL}_{2g}(\mathbb{Z}))$. By Lemma \ref{lem:anfin} the induced map $z\colon \mathcal{Z} \rightarrow \hat{\mathcal{Z}}$ is finite as well. There are injective maps
\begin{align}
\iota\colon N(\mathbb{Q})\backslash\recht{GSp}_{2g}(\mathbb{Q})/\recht{GSp}_{2g}(\mathbb{Z}) &\hookrightarrow \mathcal{Z},\\
\hat{\iota}\colon \hat{N}(\mathbb{Q})\backslash\recht{GL}_{2g}(\mathbb{Q})/\recht{GL}_{2g}(\mathbb{Z}) &\hookrightarrow \hat{\mathcal{Z}}
\end{align}
where $\iota$ sends an $a \in \recht{GSp}_{2g}(\mathbb{Q})$ to the image of $a^{-1}\varrho(X^+)a$ in $\recht{GSp}_{2g}\backslash\mathcal{H}^+_g$, and $\hat{\iota}$ is defined analogously. Then $z \circ \iota  = \hat{\iota} \circ f$, and since $z$ is finite and $\iota$, $\hat{\iota}$ are injective, we see that $f$ is finite, which proves the Theorem.
\end{proof}
\DeclareRobustCommand{\VAN}[3]{#3}

\begin{rem}
Let $\mathbb{L}$ be the set of prime numbers. By applying Remark \ref{rem:locmod} rather than Theorem \ref{thm:main1}, we can also prove that for a collection of $\mathbb{Z}_{\ell}$-group schemes $(\msc{G}_{\ell})_{\ell \in \mathbb{L}}$, there are at most finitely many special subvarieties $Y \subset \mathcal{A}_{g,n}$ with $\mathtt{GMT}(Y)_{\mathbb{Z}_{\ell}} \cong \msc{G}_{\ell}$. In particular, if $A$ is a $g$-dimensional abelian variety over a number field with a degree $n$ polarisation, and $\mathtt{G}_{\ell}(A)$ is its $\ell$-adic Galois monodromy group (as a group scheme over $\mathbb{Z}_{\ell}$), then there are only finitely many $Y$ with $\mathtt{GMT}(Y)_{\mathbb{Z}_{\ell}} \cong \mathtt{G}_{\ell}(A)$ for all $\ell \in \mathbb{L}$. On the other hand, the Mumford--Tate conjecture states that if $\mathtt{MT}(A)$ is the (integral) Mumford--Tate group of $A$, then $\mathtt{MT}(A)_{\ell} \cong \mathtt{G}_{\ell}(A)$ for all $\ell$ (see \cite{cadoretmoonen2015} for the integral version of the conjecture). In particular, it implies that at least one $Y$ as above exists, namely the special closure of the point on $\mathcal{A}_{g,n}$ corresponding to $A$.
\end{rem}

\end{document}